\documentclass[a4paper,fleqn]{cas-sc}
\usepackage{amsmath}
\usepackage{amsthm}

\usepackage[numbers, square, sort&compress]{natbib}
\def\tsc#1{\csdef{#1}{\textsc{\lowercase{#1}}\xspace}}
\tsc{WGM}
\tsc{QE}

\newtheorem{theorem}{Theorem}[section]
\newtheorem{proposition}[theorem]{Proposition} 
\newtheorem{lemma}[theorem]{Lemma}           
\newtheorem{remark}[theorem]{Remark}
\newproof{pf}{Proof}

\begin{document}
\let\WriteBookmarks\relax
\def\floatpagepagefraction{1}
\def\textpagefraction{.001}

\shorttitle{}    

\shortauthors{}  

\title [mode = title]{Length-constrained curve diffusion flow for open curves with endpoints on two intersecting lines}

\tnotemark[1] 

\tnotetext[1]{} 

%

\author[1]{Qiyuan Cheng}

\fnmark[1]

\ead{2280029457@qq.com}



\affiliation[1]{organization={School of Mathematics},
            addressline={Yunnan Normal University}, 
            city={ Kunming},
            postcode={650500}, 
            state={Yunnan},
            country={PR China}}

\author[2]{ Shunzi Guo}
\cormark[1]
\fnmark[2]

\ead{guoshunzi@yeah.net}





\affiliation[2]{organization={School of Mathematics},
            addressline={Yunnan Normal University}, 
            city={ Kunming},
            postcode={650500}, 
            state={Yunnan},
            country={PR China}}

\cortext[2]{Corresponding author}

\fntext[2]{This work was partially supported by the National Natural Science Foundation of China,  grant number 11261105.}


\begin{abstract}
We study the curve diffusion flow for open planar curves whose endpoints are constrained to lie on two fixed straight lines that intersect at an angle $\theta (\in(0,\pi)) $. 
For every such angle, we prove that under suitable initial conditions the flow exists globally in time. Moreover, we show that the evolving curve converges - exponentially and in the smooth topology - to the circular arc of a sector whose central angle is exactly  $\theta$ and whose arc length equals that of the initial curve. This result reveals how a length-preserving fourth-order geometric flow can straighten out a curve's shape while respecting boundary constraints, ultimately driving it toward a unique equilibrium: the circular arc spanning the prescribed angle.
This provides a complete description of the long-time behaviour of this fourth-order geometric flow with mixed boundary conditions.
\end{abstract}


\begin{highlights}
\item 
\item 
\item 
\end{highlights}


\begin{keywords}
 Curve diffusion flow \sep The moving boundary problem \sep Length-constrained\\
 \MSC 53C44 \sep 58J35
\end{keywords}

\maketitle

\section{Introduction}
\label{sec1}
How does a curve evolve when its length is kept constant, yet its shape is driven by a fourth-order geometric force that smoothes curvature variations?

This question lies at the heart of the length-constrained curve diffusion flow - a fourth-order parabolic 
analogue of the classical curve-shortening flow that arises naturally in materials science, interface dynamics, and the modelling of elastic filaments with anchored ends. 
Unlike the more familiar curve shortening flow (which decreases length), the curve diffusion flow tends to smooth out curvature variations while preserving the total length, 
provided an appropriate time-dependent Lagrange multiplier is added to the normal velocity.

For closed curves, the theory of curve diffusion flow is well developed: global existence, convergence to circles, and stability of equilibria are well understood \cite{4},  \cite{6},  \cite{7}. 
For open curves, however, the situation is more delicate. Endpoints introduce boundary conditions that interact strongly with the flow, and the geometry of the supporting lines fundamentally influences the possible limiting shapes.

A particularly natural and geometrically interesting configuration is that of an open planar curve whose two endpoints are pinned to two fixed straight lines that meet at an angle  $\theta(\in(0,\pi)) $.
This setup appears, for instance, in problems of capillary surfaces, elastic filaments with clamped ends, and optimal shapes under area or length constraints. 
Recently, Hiroi and Okabe \cite{1} considered the curve diffusion flow (without length constraint) for such open curves and established fundamental properties, and showed that the area is preserved while the length decreases, and they proved convergence to a circular arc under suitable conditions.

In this paper, we take the dual viewpoint and consider the length-constrained curve diffusion flow for open planar curves with boundaries on two skew lines.
Here, a time-dependent Lagrange multiplier $h(t)$ is introduced to keep the total length constant - a choice equally natural from a variational perspective and leading to qualitatively different dynamics.
The evolution equation reads
\begin{equation}
\partial_t \gamma(u,t) = -(h(t) + \partial_s^2 k)\boldsymbol{\nu},
\end{equation}
where $\gamma(u, t)$ denotes a family of curves, $k$, $\boldsymbol{\nu}$, and $s$ are the curvature, the inward unit normal, and the arc length parameter of $\gamma$, respectively. $\partial_s^n k$ denotes the n-order derivative of (scalar) curvature of $\gamma$ with respect to arc length $s$. we take
\begin{equation}
h(t) = \frac{\int (\partial_s k)^2 ds}{\theta} ,
\end{equation}
where $\theta \in (0,\pi)$ of the two skew lines.

More precisely, if we parametrize $\gamma$ counterclockwise by $u$, then $\boldsymbol{\nu} = R(\gamma_u / |\gamma_u|)$ with $R = \begin{pmatrix} 0 & -1 \\ 1 & 0 \end{pmatrix}$, and $\partial_s = \partial_u / |\gamma_u|$. We recall the Frenet--Serret formulae:
\begin{equation}
\partial_s^2 \gamma = k \boldsymbol{\nu}, \quad \partial_s \boldsymbol{\nu} = -k \partial_s \gamma.
\end{equation}

In this paper we consider the following initial-boundary value problem for the curve diffusion flow:
\begin{equation}
\begin{cases}
\partial_t \gamma = -(h(t) + \partial_s^2 k)\boldsymbol{\nu} & \text{in } (-1, 1) \times (0, T), \\
\gamma(-1, t) \in \eta_0, \ \gamma(1, t) \in \eta_\theta & \text{in } (0, T), \\
\langle \boldsymbol{\nu}(-1, t), \boldsymbol{\nu}_{\eta_0} \rangle = \langle \boldsymbol{\nu}(1, t), \boldsymbol{\nu}_{\eta_\theta} \rangle = 0 & \text{in } (0, T), \\
\partial_s k(\pm 1, t) = 0 & \text{in } (0, T), \\
\gamma(u, 0) = \gamma_0(u) & \text{in } (-1, 1),
\end{cases}
\end{equation}
where $\theta \in (0, \pi]$, and $\boldsymbol{\nu}_{\eta_\alpha}$ denotes the unit normal of the straight line $\eta_\alpha$ defined by
\[
\eta_\alpha := \{ \lambda (\cos \alpha, \sin \alpha) \mid \lambda \in \mathbb{R} \}.
\]

In order to state the main result of this paper, we mention the initial conditions. We assume that $\gamma_0 : (-1, 1) \to \mathbb{R}^2$ is a smooth regular curve and satisfies
\begin{equation}
\begin{cases}
\gamma_0(-1) \in \eta_0, \quad \gamma_0(1) \in \eta_\theta, \\
\langle \nu_0(-1), \nu_{\eta_0} \rangle = \langle \nu_0(1), \nu_{\eta_\theta} \rangle = 0, \\
\partial_s k_0(\pm 1) = 0,
\end{cases} \label{eq:4}
\end{equation}
as the compatibility condition. Here we define the following notation:
\begin{equation*}
L[\gamma] := \int_{-1}^1 |\partial_u \gamma| \, du, \quad A[\gamma] := -\frac{1}{2} \int_{-1}^1 \langle \gamma, R(\gamma_u / |\gamma_u|) \rangle \, du
\end{equation*}
\begin{equation*}
I[\gamma] := \frac{L[\gamma]^2}{4\pi A[\gamma]}, \quad \operatorname{Kosc}[\gamma] := L[\gamma] \int_\gamma (k - \bar{k})^2 \, ds, \quad \bar{k} := \frac{1}{L[\gamma]} \int_\gamma k \, ds.
\end{equation*}
$L[\gamma]$ and $ A[\gamma]$ denotes the length and the signed area enclosed by $\gamma$, the quantity $I[\gamma]$ denotes the isoperimetric ratio of $\gamma$, and $\operatorname{Kosc}[\gamma]$ and $\bar{k}$ denote the  the normalized oscillation of the curvature and the $L^2$-integral mean under the arc length parametrization, respectively. For the relationship between $ds$ and $du$, see Sect. 2.\\

The main result of this paper is stated as follows:\\

\begin{theorem}
Let $\theta \in (0, \pi)$. Suppose that $\gamma_0 : (-1, 1) \to \mathbb{R}^2$ is a smooth regular curve and satisfies conditions \eqref{eq:4} and
\begin{equation}
A[\gamma_0] > 0, \quad \int_{\gamma_0} k_0 \, ds = \theta, \quad \operatorname{Kosc}[\gamma_0] < K^*, \quad \text{and} \quad I[\gamma_0] < \frac{\theta^3}{2\pi\theta^2-2\pi K^*},
\end{equation}
where $2K_*$ is a unique positive root of the equation
\[
2 - \frac{L_0}{\theta\pi^{\frac{3}{2}}}x^{\frac{3}{2}} - \frac{3}{\pi}x - \frac{6\theta}{\pi}\sqrt{x} = 0.
\]

\noindent Then problem \textmd{(4)} possesses a unique smooth global-in-time solution $\gamma : (-1, 1) \times [0, \infty) \to \mathbb{R}^2$. Moreover, as $t \to \infty$, the solution $\gamma(\cdot, t)$ converges exponentially to the circular arc $\gamma_\infty$ of the central angle $\theta$ and the area $L[\gamma_\infty] = L[\gamma_0]$ in the $C^\infty$-topology.
\end{theorem}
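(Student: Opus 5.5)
We follow the scheme of Hiroi and Okabe \cite{1}, with the length constraint taking the place of area preservation. Short-time existence and uniqueness of a smooth solution of (4) will be taken from standard parabolic theory for fourth-order equations with the mixed boundary conditions, using the compatibility conditions \eqref{eq:4}. The first task is to identify the conserved and monotone quantities.

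\textbf{Step 1: conservation laws.} Using the Frenet--Serret formulae, the evolution of the metric, and the boundary conditions (the endpoints slide on $\eta_0,\eta_\theta$, the curve meets them perpendicularly, and $\partial_s k=0$ at $u=\pm1$), we show the following.
\begin{itemize}
\item The total curvature $\int k\,ds$ is constant. Since the normal at the endpoints is fixed by the orthogonality condition, it equals $\theta$ for all time.
\item Writing $V=-(h+\partial_s^2k)$ for the normal velocity, the length evolves by
\[
\frac{d}{dt}L=-\int kV\,ds=h\theta-\int(\partial_sk)^2ds=0,
\]
after integrating by parts and using $\partial_sk(\pm1)=0$. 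This is exactly why $h$ has the prescribed form, and so $L\equiv L_0$.
\item The area evolves by $\frac{d}{dt}A=-\int V\,ds=hL_0\ge0$. Hence $A$ is nondecreasing, so $A>0$ and $I[\gamma]\le I[\gamma_0]$ along the flow.
\end{itemize}

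\textbf{Step 2: the key estimate, and the main obstacle.} We derive a differential inequality for $\operatorname{Kosc}$. We compute
\[
\frac{d}{dt}\int (k-\bar k)^2ds = -2\int (\partial_s^2k)^2ds+(\text{lower order terms}),
\]
where the lower order terms include cubic and quartic terms such as $\int k^2(\partial_sk)^2$ and terms involving $h$. Here $\bar k=\theta/L_0$ is constant. We then control these terms using:
\begin{itemize}
\item Poincaré and Wirtinger inequalities adapted to the boundary conditions. Since $\partial_s k$ vanishes at the ends, we get $\int(\partial_sk)^2\le \frac{L^2}{\pi^2}\int(\partial_s^2k)^2$.
\item Gagliardo--Nirenberg-type interpolation, giving for example $\|k-\bar k\|_\infty^2\le \frac{2}{L}\|k-\bar k\|_2^2+\dots$.
\end{itemize}
The aim is to obtain
\[
\frac{d}{dt}\operatorname{Kosc}\le -\Big(2-\tfrac{L_0}{\theta\pi^{3/2}}\operatorname{Kosc}^{3/2}-\tfrac3\pi\operatorname{Kosc}-\tfrac{6\theta}{\pi}\operatorname{Kosc}^{1/2}\Big)\,C\int(\partial_s^2k)^2ds .
\]
The polynomial appearing here is exactly the one in the hypothesis. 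By continuity, $\operatorname{Kosc}<K^*$ is then preserved, and moreover $\operatorname{Kosc}$ decays. I expect the bookkeeping of constants to be the hardest part, in particular the role of $h$ and of the boundary terms. The isoperimetric hypothesis on $I[\gamma_0]$ enters here: combined with the monotonicity of $A$ and an inequality relating $A$, $L$ and $\operatorname{Kosc}$ for curves with total curvature $\theta$, it keeps the curve away from degenerate configurations and closes the bootstrap.

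\textbf{Step 3: global existence and convergence.} From the bound on $\operatorname{Kosc}$ we get a uniform $L^2$ bound on $k$. Higher estimates $\|\partial_s^m k\|_{L^2}$ then follow by induction, from the evolution equations for $\partial_s^mk$ together with interpolation, with all boundary terms vanishing by symmetry of the conditions. Length is fixed, so the curve cannot shrink. The usual blow-up criterion then gives $T=\infty$.

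Integrating the inequality of Step 2 gives $\int_0^\infty\!\int(\partial_s^2k)^2\,ds\,dt<\infty$. Together with the Poincaré inequality, this yields exponential decay of $\operatorname{Kosc}$ and then of all $\|\partial_s^mk\|$. Hence $k\to\theta/L_0$ and $|V|$ decays exponentially, so $\gamma(t)$ converges in $C^\infty$. The limit has constant curvature $\theta/L_0$, total curvature $\theta$, length $L_0$, and meets both lines orthogonally. It is therefore the circular arc of a sector with vertex at the intersection point and central angle $\theta$.
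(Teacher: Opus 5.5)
\textbf{The gap is in Step 2.} If you carry out the computation you sketch, with $\bar k=\theta/L_0$ constant and all boundary terms removed by $\partial_sk=\partial_s^3k=0$ at $u=\pm1$, you get exactly
\begin{align*}
\frac{d}{dt}\operatorname{Kosc}[\gamma]&=-2L_0\|\partial_s^2k\|_2^2+3L_0\int_\gamma(k-\bar k)^2(\partial_sk)^2\,ds+6\bar kL_0\int_\gamma(k-\bar k)(\partial_sk)^2\,ds\\
&\quad+2\bar k^2L_0\|\partial_sk\|_2^2-hL_0\Big[\int_\gamma(k-\bar k)^3\,ds+3\bar k\int_\gamma(k-\bar k)^2\,ds\Big].
\end{align*}
Interpolation controls the second, third and cubic $h$-terms by the polynomial factor, and $-3hL_0\bar k\int_\gamma(k-\bar k)^2ds\le 0$. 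The term $2\bar k^2L_0\|\partial_sk\|_2^2=\frac{2\theta^2}{L_0}\|\partial_sk\|_2^2$, however, is nonnegative, carries no small power of $\operatorname{Kosc}$, and stays on the right-hand side. So the inequality you aim for is false, and $\operatorname{Kosc}$ is not monotone. Neither ``$\operatorname{Kosc}<K^*$ is preserved'' nor ``$\operatorname{Kosc}$ decays'' follows. You could absorb the term by Wirtinger, $\|\partial_sk\|_2^2\le\frac{L_0^2}{\pi^2}\|\partial_s^2k\|_2^2$. But that replaces the constant $2$ by $2(1-\theta^2/\pi^2)$ and moves the threshold, so it does not prove the theorem under the stated hypothesis on $\operatorname{Kosc}[\gamma_0]$.

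\textbf{The missing idea is the actual role of the isoperimetric hypothesis,} which you leave vague (``keeps the curve away from degenerate configurations''). Since $\frac{dA}{dt}=hL_0=\frac{L_0}{\theta}\|\partial_sk\|_2^2$, the bad term equals exactly $\frac{2\theta^3}{L_0^2}\frac{dA}{dt}$. So as long as $\operatorname{Kosc}<2K^*$, where the polynomial is positive, you get $\operatorname{Kosc}(t)\le\operatorname{Kosc}[\gamma_0]+\frac{2\theta^3}{L_0^2}(A(t)-A[\gamma_0])$. The isoperimetric inequality for curves between the two lines, $L^2\ge2\theta A$ (Hiroi--Okabe), caps $A(t)\le L_0^2/(2\theta)$. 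This gives $\operatorname{Kosc}(t)\le\operatorname{Kosc}[\gamma_0]+\theta^2\big(1-\frac{\theta}{2\pi I[\gamma_0]}\big)<K^*+K^*$. That is why the root is $2K^*$ while the initial bound is $K^*$: half the budget is consumed by area-driven growth.

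\textbf{Step 3 inherits the problem.} Time-integrability must come from $\operatorname{Kosc}\le\frac{L_0^3}{\pi^2}\|\partial_sk\|_2^2=\frac{\theta L_0^2}{\pi^2}\frac{dA}{dt}$ together with the same bound on $A$. Combined with a uniform bound on $\frac{d}{dt}\operatorname{Kosc}$ from the higher-order estimates, this yields $\operatorname{Kosc}\to0$. Only then can you absorb the $\bar k^2$-term by Wirtinger. In the $\|\partial_s^2k\|_2^2$ estimate the analogous term is $2\bar k^2\|\partial_s^3k\|_2^2\le\frac{2\theta^2}{\pi^2}\|\partial_s^4k\|_2^2$, which leaves a coefficient $-2(1-\theta^2/\pi^2)+o(1)$ and hence exponential decay. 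This is precisely where $\theta<\pi$ is needed, and your proposal never uses it.
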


\begin{remark} Our main result is that under smallness conditions on the initial normalised curvature oscillation and isoperimetric ratio, the flow exists globally in time and converges – exponentially fast in the smooth topology – to the unique circular arc of angle $\theta$  whose arc length equals that of the initial curve. This characterises the asymptotic shape completely.
\end{remark}

\begin{remark} 
The proof combines several ingredients: local existence via minimising movements, a detailed analysis of the evolution of the normalised curvature oscillation $\operatorname{Kosc}$ as a Lyapunov functional, refined interpolation inequalities adapted to the boundary conditions, and a compactness argument to identify the unique limit. Exponential convergence is obtained via decay estimates for higher-order derivatives and a spectral gap near equilibrium.\end{remark}

\begin{remark} 
Our results extend the work of Hiroi and Okabe \cite{1} to the length-constrained case and complement recent studies on higher-order geometric flows with boundary conditions \cite{2,3, 7}. They also provide a complete picture of the long-time behaviour of length-preserving curve diffusion for open curves with fixed-angle endpoints.
\end{remark}

The paper is organised as follows. Section $2$ collects notation and preliminary inequalities. Section $3$ establishes local existence and basic properties. Section $4$ derives uniform estimates and proves global existence. Section $5$ establishes exponential convergence to the circular arc.



\section{Preliminaries}
\label{sec2}
\noindent In this section we collect several inequalities, notation, and fundamental identities under curve diffusion flow that will be used in the rest of this paper.
\begin{lemma}\cite{2}
Let \( f \in (0, L) \to \mathbb{R} \) satisfy \( f \in H^1(0, L) \) and \( \int_0^L f \, ds = 0 \). Then
\begin{align}
\int_0^L f^2 \, ds &\leq \frac{L^2}{\pi^2} \int_0^L f_s^2 \, ds,  \\
\| f \|_{L^\infty(0,L)}^2 &\leq \frac{2L}{\pi} \int_0^L f_s^2 \, ds.
\end{align}
\end{lemma}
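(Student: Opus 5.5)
The statement is the Wirtinger/Poincaré inequality and its $L^\infty$ companion for mean-zero $H^1$ functions on an interval. I would prove each inequality separately. By density, it suffices to treat smooth $f$ on $[0,L]$ with $\int_0^L f\,ds=0$; both sides are continuous in the $H^1$ norm, and the mean-zero condition is preserved by subtracting the mean of the approximants.

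For the first inequality I would expand $f$ in the Neumann eigenbasis of $-\partial_s^2$ on $(0,L)$, namely $\{\cos(n\pi s/L)\}_{n\ge 0}$. This family is complete and orthogonal in $L^2(0,L)$. The zero-mean condition kills the $n=0$ coefficient, so $f=\sum_{n\ge1}a_n\cos(n\pi s/L)$. Since $f\in H^1$, the derivative has the sine expansion $f_s=-\sum_{n\ge 1} a_n (n\pi/L)\sin(n\pi s/L)$; this follows by integrating by parts against $\sin(n\pi s/L)$, and no boundary terms appear because the sines vanish at $0$ and $L$. Parseval then gives
\[
\int_0^L f^2\,ds=\frac L2\sum_{n\ge1} a_n^2\le \frac{L^2}{\pi^2}\cdot\frac L2\sum_{n\ge1}\frac{n^2\pi^2}{L^2}a_n^2=\frac{L^2}{\pi^2}\int_0^L f_s^2\,ds,
\]
which is the first inequality, with equality for $n=1$.

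For the second inequality, note that $f$ is continuous with mean zero, so it vanishes at some point $s_0\in[0,L]$. For any $s$ we have $f(s)^2=2\int_{s_0}^s f f_s\,d\sigma$, hence $f(s)^2\le 2\int_0^L |f||f_s|\,ds\le 2\|f\|_{L^2}\|f_s\|_{L^2}$ by Cauchy--Schwarz. Inserting the first inequality, $\|f\|_{L^2}\le (L/\pi)\|f_s\|_{L^2}$, gives $\|f\|_{L^\infty}^2\le \frac{2L}{\pi}\int_0^L f_s^2\,ds$, as claimed.

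The only delicate point is the justification of the Fourier step: the completeness of the cosine basis and the termwise identification of the derivative coefficients for $H^1$ functions. The integration-by-parts argument handles the latter directly. An alternative that avoids series altogether is to minimise the Rayleigh quotient $\int f_s^2/\int f^2$ over the mean-zero subspace of $H^1(0,L)$. The minimiser exists by compactness of $H^1\hookrightarrow L^2$ and satisfies $-f''=\lambda f$ with natural Neumann conditions, which gives $\lambda_{\min}=\pi^2/L^2$. Either route is standard, so I expect no real obstacle.
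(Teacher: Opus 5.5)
Your proof is correct. The Neumann cosine expansion gives the sharp Wirtinger constant $L^2/\pi^2$, and the argument for the second inequality gives exactly the constant $2L/\pi$: you use a zero $s_0$ of the mean-zero continuous $f$, then $f(s)^2=2\int_{s_0}^s f f_s\,d\sigma$ with Cauchy--Schwarz, then the first inequality. The paper does not prove this lemma itself; it only cites \cite{2} and calls the inequalities variants of the Poincar\'e inequality, and your argument is the standard proof of that kind.
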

\begin{lemma}\cite{2}
Let \( f : (0, L) \to \mathbb{R} \) satisfy \( f \in H^1(0, L) \) and \( f(0) = f(L) = 0 \). Then
\begin{align}
\int_0^L f^2 \, ds &\leq \frac{L^2}{\pi^2} \int_0^L f_s^2 \, ds,  \\
\| f \|_{L^\infty(0,L)}^2 &\leq \frac{L}{\pi} \int_0^L f_s^2 \, ds.
\end{align}
\end{lemma}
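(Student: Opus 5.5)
The statement to prove is the second cited lemma: for $f\in H^1(0,L)$ with $f(0)=f(L)=0$, the Poincaré (Wirtinger) inequality with constant $L^2/\pi^2$ and the sup bound with constant $L/\pi$. I would treat the two inequalities separately, by a Fourier sine expansion for the first and a weighted Cauchy--Schwarz argument for the second.

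\textbf{The $L^2$ bound.} Expand $f$ in the sine basis $\{\sin(n\pi s/L)\}_{n\ge1}$. This family is complete and orthogonal in $L^2(0,L)$, and since $f$ vanishes at both ends, the termwise derivative is the cosine expansion of $f_s$. Write $f=\sum_{n\ge 1} b_n\sin(n\pi s/L)$. Parseval then gives
\[
\int_0^L f^2\,ds=\frac L2\sum_{n\ge1} b_n^2,\qquad
\int_0^L f_s^2\,ds=\frac L2\sum_{n\ge1}\Big(\frac{n\pi}{L}\Big)^2 b_n^2 .
\]
Comparing the two sums term by term, with $n^2\ge1$, yields the first inequality. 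Equality holds for $f=\sin(\pi s/L)$.

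\textbf{The sup bound.} Fix $x\in(0,L)$ and use both boundary values:
\[
f(x)=\int_0^x f_s\,ds, \qquad f(x)=-\int_x^L f_s\,ds.
\]
Cauchy--Schwarz on each piece gives $f(x)^2\le x\int_0^x f_s^2\,ds$ and $f(x)^2\le (L-x)\int_x^L f_s^2\,ds$. Divide these by $x$ and $L-x$ respectively and add:
\[
f(x)^2\Big(\frac1x+\frac1{L-x}\Big)\le\int_0^L f_s^2\,ds,
\]
that is,
\[
f(x)^2\le \frac{x(L-x)}{L}\int_0^L f_s^2\,ds\le \frac L4\int_0^L f_s^2\,ds .
\]
Since $L/4\le L/\pi$, the stated second inequality follows. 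The argument in fact gives the sharper constant $L/4$.

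\textbf{Main obstacle.} The only delicate point is justifying the termwise differentiation of the sine series. To handle it, I compute the cosine coefficients of $f_s$ by integration by parts. This is legitimate for $H^1$ functions, which are absolutely continuous, and the boundary terms vanish because $f(0)=f(L)=0$. The cosine coefficient of index $n$ equals $(n\pi/L)b_n$, and the zeroth coefficient is $\frac1L\int_0^L f_s\,ds=(f(L)-f(0))/L=0$. Bessel's inequality then suffices; full Parseval for $f_s$ is not needed.
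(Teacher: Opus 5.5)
Your proof is correct, and it gives a sharper constant than the statement asks for.

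The paper gives no proof of this lemma. It cites \cite{2} and calls the estimates variants of the Poincar\'e inequality, so there is no argument to match line by line. Your $L^2$ part is the standard Wirtinger argument: sine expansion, cosine coefficients of $f_s$ identified by integration by parts, Parseval for $f$ and Bessel for $f_s$.

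For the sup bound you take a different route from the one the stated constant $L/\pi$ suggests. That constant is exactly what comes out of writing $f(x)^2=\int_0^x f f_s\,ds-\int_x^L f f_s\,ds\le\|f\|_2\|f_s\|_2$ and then inserting the first inequality. The same computation using a single zero of $f$ produces the constant $2L/\pi$ of Lemma 2.1.

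Your two-sided Cauchy--Schwarz estimate on $f_s$ is more elementary and does not depend on the $L^2$ inequality. It also gives the sharp constant $L/4$, attained by $f(s)=\min\{s,L-s\}$.

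The product route has the advantage of working uniformly across both lemmas. In the mean-zero setting only one interior zero is available. There the direct analogue of your argument gives only $f(x)^2\le L\int_0^L f_s^2\,ds$, which is weaker than $2L/\pi$.
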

These inequalities above are variants of the Poincar\'{e} inequality, and the detailed proof process can be found in Ref. \cite{2}.\\

\noindent We use the following interpolation inequality  \cite{4}(see e.g. Appendix C)\\
\begin{proposition}
Let $\gamma : (-1, 1) \to \mathbb{R}^2$ be a smooth regular curve. Then for all $m \in \mathbb{N}$, $p \geq 2$, $0 \leq j < m$, and any $\phi$ with $\|\phi\|_{W_\gamma^{m,2}} < \infty$, we have
\[
\|\partial_s^j \phi\|_{L_\gamma^p} \leq C \|\phi\|_{L_\gamma^2}^{1-\alpha} \|\phi\|_{W_\gamma^{m,2}}^\alpha
\]
with $C = C(j, m, p)$ and
\[
\alpha = \frac{1}{m}\left(j + \frac{1}{2} - \frac{1}{p}\right),
\]
where $\|\phi\|_{W_\gamma^{r,q}} := \sum_{i=0}^r \|\partial_s^i \phi\|_{L_\gamma^q}$ and
\[
\|\partial_s^i \phi\|_{L_\gamma^q} := L[\gamma]^{i+1 - \frac{1}{q}} \left( \int_\gamma |\partial_s^i \phi|^q \, ds \right)^{\frac{1}{q}}.
\]
\end{proposition}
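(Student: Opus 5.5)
The plan is to reduce the statement to the classical Gagliardo--Nirenberg inequality on a fixed interval, using scale invariance. First reparametrize $\gamma$ by arc length, so that $\phi$ becomes a function $f$ on $(0,L)$ with $L=L[\gamma]$. Then rescale by $\tilde f(x):=f(Lx)$, $x\in(0,1)$. We have $\partial_x^i\tilde f = L^i(\partial_s^i f)(Lx)$ and $ds = L\,dx$. Hence
\[
L^{i+1-\frac1q}\Big(\int_0^L|\partial_s^i f|^q\,ds\Big)^{\frac1q} = L\,\|\partial_x^i\tilde f\|_{L^q(0,1)} .
\]
So every norm in the statement equals $L$ times the corresponding standard Lebesgue or Sobolev norm of $\tilde f$ on $(0,1)$. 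The common factor $L$ appears with total exponent $1=(1-\alpha)+\alpha$ on the right-hand side, so it cancels. It therefore suffices to prove
\[
\|\tilde f^{(j)}\|_{L^p(0,1)}\le C\,\|\tilde f\|_{L^2(0,1)}^{1-\alpha}\|\tilde f\|_{W^{m,2}(0,1)}^{\alpha},
\]
where the constant depends only on $j,m,p$. In particular it is independent of $\gamma$.

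To handle the boundary, I would use a Hestenes-type reflection extension $E$ from $(0,1)$ to $\mathbb{R}$, cut off to a fixed compact set. This operator is linear and bounded simultaneously from $W^{k,2}(0,1)$ to $W^{k,2}(\mathbb{R})$ for every $0\le k\le m$, including $k=0$. Set $g=E\tilde f$. By Plancherel and Hölder on the Fourier side, $\int|\xi|^{2i}|\hat g|^2\le(\int|\hat g|^2)^{1-i/m}(\int|\xi|^{2m}|\hat g|^2)^{i/m}$. This gives
\[
\|g^{(i)}\|_{L^2(\mathbb{R})}\le\|g\|_{L^2}^{1-i/m}\|g^{(m)}\|_{L^2}^{i/m}, \qquad 0\le i\le m .
\]
Next, for $h\in H^1(\mathbb{R})$ we have the Agmon inequality $\|h\|_\infty^2\le\|h\|_2\|h'\|_2$, which follows from $h(x)^2=2\int_{-\infty}^x hh'$. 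Combined with $\|h\|_p\le\|h\|_\infty^{1-2/p}\|h\|_2^{2/p}$, it yields
\[
\|h\|_p\le\|h\|_2^{\frac12+\frac1p}\|h'\|_2^{\frac12-\frac1p}.
\]

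Now apply this with $h=g^{(j)}$, and use the Fourier interpolation to bound $\|g^{(j)}\|_2$ and $\|g^{(j+1)}\|_2$; note $j+1\le m$. The exponent of $\|g^{(m)}\|_2$ becomes
\[
\tfrac jm\big(\tfrac12+\tfrac1p\big)+\tfrac{j+1}m\big(\tfrac12-\tfrac1p\big)=\tfrac1m\big(j+\tfrac12-\tfrac1p\big)=\alpha,
\]
and the exponent of $\|g\|_2$ is $1-\alpha$. Finally, restrict back to $(0,1)$ and use the boundedness of $E$: $\|g\|_2\le C\|\tilde f\|_{L^2}$ and $\|g^{(m)}\|_2\le C\|\tilde f\|_{W^{m,2}}$. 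This gives the claimed inequality, and undoing the scaling proves the Proposition.

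The main obstacle is the bounded domain. On $(0,1)$ the homogeneous inequality with only $\|\tilde f^{(m)}\|$ fails, since polynomials of degree below $m$ are counterexamples. The full $W^{m,2}$ norm is needed, and the extension operator must control the $L^2$ norm and the $W^{m,2}$ norm at the same time. This is exactly what the higher-order reflection $E\tilde f(-x)=\sum_{k}c_k\tilde f(\lambda_k x)$ provides, with coefficients chosen so that derivatives up to order $m-1$ match at $0$ (and likewise at $1$). I would verify this boundedness carefully, since the argument rests on it. Everything else is routine, and this approach follows the reference \cite{4}.
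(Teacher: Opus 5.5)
Your proof is correct. The paper gives no argument for this proposition; it simply quotes it from \cite{4}. Your first step is the standard justification for inequalities of this kind. The weights $L[\gamma]^{i+1-1/q}$ make every norm scale like $L[\gamma]$ under $\tilde f(x)=f(Lx)$, so after rescaling to unit length the statement becomes the classical Gagliardo--Nirenberg inequality on a fixed interval, with a constant automatically independent of $\gamma$.

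Where you go further is in proving that interval inequality from scratch instead of citing it. You use an extension operator bounded on $L^2$ and $W^{m,2}$ simultaneously, then Fourier interpolation $\|g^{(i)}\|_2\le\|g\|_2^{1-i/m}\|g^{(m)}\|_2^{i/m}$ on $\mathbb{R}$, then Agmon plus H\"older applied to $g^{(j)}$. The exponent bookkeeping is right. Since $0\le\alpha<1$ for $p\ge 2$ and $j<m$, replacing $\|g\|_2$ and $\|g^{(m)}\|_2$ by their extension bounds is legitimate.

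This buys a self-contained proof, and it makes transparent why only the inhomogeneous norm $\|\phi\|_{W^{m,2}_\gamma}$ can appear on a bounded interval. That is also why the homogeneous form used later in Lemma 5.1, with $\|\partial_s^m k\|_2$ in place of the full norm, needs more: it relies on the Poincar\'e inequalities of Lemmas 2.1--2.2 and on the boundary information of Lemma 3.3 to absorb the lower-order terms.

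Two harmless details. First, the one-sided identity $h(x)^2=2\int_{-\infty}^x hh'$ only gives $\|h\|_\infty^2\le 2\|h\|_2\|h'\|_2$; average it with the right-sided identity to get constant $1$, though the constant is irrelevant here. Second, the reflected arguments $-\lambda_k x$ must be kept inside $(0,1)$ by cutoffs, as you already indicate.
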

We collect the notation used in the rest of this paper. Let $\gamma : (-1, 1) \times [0, T) \to \mathbb{R}^2$ be a family of smooth and regular curves. The arc length parameter $s$ of $\gamma$ is defined by
\[
s = s(u, t) := \int_{-1}^u |\partial_v \gamma(v, t)| \, dv.
\]

\noindent We denote the integral of function $f$ defined on $\gamma$ with respect to the arc length parameter $s$ of $\gamma$ by
\[
\int_\gamma f \, ds := \int_0^{L[\gamma]} f(s) \, ds = \int_{-1}^1 f(u) |\partial_u \gamma| \, du.
\]

\noindent Furthermore, we define the $L^p$-norm with respect to the arc length parameter of $\gamma$:
\[
\|f\|_p := \left( \int_\gamma |f|^p \, ds \right)^{\frac{1}{p}}, \quad 1 \leq p < \infty.
\]

We close this section with fundamental identities under the length-constrained curve diffusion flow. For the proof of the identities, see e.g. [2].

\begin{lemma}
Let $\gamma : (-1, 1) \times [0, T) \to \mathbb{R}^2$ be a solution of (4). Then
\begin{equation}
\partial_t ds = k (\partial_s^2 k +\frac{\int (\partial_s k)^2 ds}{\theta})\, ds,
\end{equation}
\begin{equation}
\partial_t \partial_s - \partial_s \partial_t = -k (\partial_s^2 k \, \partial_s + \frac{\int (\partial_s k)^2 ds}{\theta}).
\end{equation}
\end{lemma}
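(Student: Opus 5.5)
The plan is to differentiate the length element $|\partial_u\gamma|$ directly in time and then read off the commutator from the definition $\partial_s = |\partial_u\gamma|^{-1}\partial_u$. Nothing from the earlier lemmas is needed beyond the Frenet--Serret formulae (3). The boundary conditions play no role, because both identities are pointwise in $(u,t)\in(-1,1)\times(0,T)$.

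\emph{Step 1: the length element.} Write the flow as $\partial_t\gamma = V\boldsymbol{\nu}$ with scalar normal velocity $V := -(h(t)+\partial_s^2k)$, where $h(t)=\theta^{-1}\int(\partial_sk)^2\,ds$ depends only on $t$. Let $\boldsymbol{\tau}=\partial_s\gamma$ be the unit tangent. Since $\gamma$ is smooth, $\partial_t$ and $\partial_u$ commute, so
\[
\partial_t|\partial_u\gamma| = \frac{\langle \partial_u\partial_t\gamma,\partial_u\gamma\rangle}{|\partial_u\gamma|} = |\partial_u\gamma|\,\langle \partial_s(V\boldsymbol{\nu}),\boldsymbol{\tau}\rangle .
\]
Expanding $\partial_s(V\boldsymbol{\nu}) = (\partial_sV)\boldsymbol{\nu} + V\partial_s\boldsymbol{\nu}$, using $\langle\boldsymbol{\nu},\boldsymbol{\tau}\rangle=0$ and $\partial_s\boldsymbol{\nu}=-k\boldsymbol{\tau}$ from (3), we get
\[
\partial_t|\partial_u\gamma| = -kV|\partial_u\gamma| = k\Bigl(\partial_s^2k+\frac{\int(\partial_sk)^2ds}{\theta}\Bigr)|\partial_u\gamma| .
\]
Since $ds=|\partial_u\gamma|\,du$ and $du$ is independent of $t$, this is the first identity.

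\emph{Step 2: the commutator.} For any smooth $f$,
\[
\partial_t\partial_s f = \partial_t\bigl(|\partial_u\gamma|^{-1}\partial_uf\bigr) = -\frac{\partial_t|\partial_u\gamma|}{|\partial_u\gamma|^{2}}\,\partial_uf + |\partial_u\gamma|^{-1}\partial_u\partial_tf .
\]
By Step 1 the first term equals $kV\,\partial_sf$, and the second term is $\partial_s\partial_tf$. Hence
\[
\partial_t\partial_s - \partial_s\partial_t = kV\,\partial_s = -k\Bigl(\partial_s^2k+\frac{\int(\partial_sk)^2ds}{\theta}\Bigr)\partial_s .
\]
In the displayed statement of the lemma, $\partial_s$ is to be understood as acting on the whole bracket, including the term involving $h$.

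\emph{Main point of care.} There is no real obstacle here. The only points needing attention are:
\begin{itemize}
\item Sign conventions: $\boldsymbol{\nu}=R\boldsymbol{\tau}$ is the inward normal, so $\partial_s\boldsymbol{\nu}=-k\boldsymbol{\tau}$.
\item The Lagrange multiplier $h(t)$ is spatially constant. So although it enters $V$ and hence both identities, it contributes no $\partial_s h$ terms.
\item Smoothness is needed to interchange $\partial_t$ and $\partial_u$.
\end{itemize}
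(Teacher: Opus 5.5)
Your proof is correct, and it is the standard direct computation that the paper does not carry out itself but defers to [2]: you differentiate $|\partial_u\gamma|$ using $\partial_t\gamma = V\boldsymbol{\nu}$ and $\partial_s\boldsymbol{\nu} = -k\boldsymbol{\tau}$, then differentiate $\partial_s = |\partial_u\gamma|^{-1}\partial_u$. Your reading of the commutator as $-k\bigl(\partial_s^2 k + h(t)\bigr)\partial_s$ is the right one, since the displayed formula taken literally fails on constant functions, and it is exactly the form the paper itself uses in the proof of Lemma 2.6.
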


\begin{lemma}
Let $\gamma : (-1, 1) \times [0, T) \to \mathbb{R}^2$ be a solution of (4). Then
\begin{equation}
\partial_t \boldsymbol{\tau} = -\partial_s^3 k \boldsymbol{v},
\end{equation}
\begin{equation}
\partial_t \boldsymbol{v} = \partial_s^3 k \boldsymbol{\tau},
\end{equation}
\begin{equation}
\partial_t k = -\partial_s^4 k - k^2 \partial_s^2 k -\frac{\int (\partial_s k)^2 ds}{\theta}k^2.
\end{equation}
\end{lemma}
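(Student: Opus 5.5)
We argue by direct computation from the evolution equation, using the Frenet--Serret formulae and commuting $\partial_t$ with $\partial_s$ via Lemma 2.4. Throughout we write $V := -(h(t)+\partial_s^2 k)$ for the normal speed, so that $\partial_t\gamma = V\boldsymbol{\nu}$ with $h(t)=\theta^{-1}\int(\partial_s k)^2\,ds$. The three identities are then obtained in order: first the tangent, then the normal, then the curvature.

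\textbf{Tangent and normal.} We write $\boldsymbol{\tau}=\partial_s\gamma$ and apply the commutator from Lemma 2.4, which in terms of $V$ reads $\partial_t\partial_s=\partial_s\partial_t + kV\partial_s$. This gives
\[
\partial_t\boldsymbol{\tau}=\partial_s(V\boldsymbol{\nu})+kV\boldsymbol{\tau}=(\partial_s V)\boldsymbol{\nu}-kV\boldsymbol{\tau}+kV\boldsymbol{\tau}=(\partial_s V)\boldsymbol{\nu}.
\]
Since $h$ depends only on $t$, we have $\partial_s V=-\partial_s^3k$, which yields the first identity. For the normal, we differentiate $\langle\boldsymbol{\nu},\boldsymbol{\nu}\rangle=1$ and $\langle\boldsymbol{\nu},\boldsymbol{\tau}\rangle=0$ in time. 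These give $\partial_t\boldsymbol{\nu}\perp\boldsymbol{\nu}$ and $\langle\partial_t\boldsymbol{\nu},\boldsymbol{\tau}\rangle=-\langle\boldsymbol{\nu},\partial_t\boldsymbol{\tau}\rangle=\partial_s^3k$. Alternatively, one can apply $R$ to the tangent identity, using $\boldsymbol{\nu}=R\boldsymbol{\tau}$ and $R\boldsymbol{\nu}=-\boldsymbol{\tau}$. Either route gives the second identity.

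\textbf{Curvature.} We use $k=\langle\partial_s\boldsymbol{\tau},\boldsymbol{\nu}\rangle$. Then
\[
\partial_t k=\langle\partial_t\partial_s\boldsymbol{\tau},\boldsymbol{\nu}\rangle+\langle\partial_s\boldsymbol{\tau},\partial_t\boldsymbol{\nu}\rangle .
\]
The second term vanishes, because $\partial_s\boldsymbol{\tau}=k\boldsymbol{\nu}$ is orthogonal to $\partial_t\boldsymbol{\nu}\parallel\boldsymbol{\tau}$. For the first term, the commutator gives
\[
\partial_t\partial_s\boldsymbol{\tau}=\partial_s\partial_t\boldsymbol{\tau}+kV\partial_s\boldsymbol{\tau}=\partial_s\bigl((\partial_sV)\boldsymbol{\nu}\bigr)+k^2V\boldsymbol{\nu}.
\]
Taking its normal component yields
\[
\partial_t k=\partial_s^2V+k^2V=-\partial_s^4k-k^2\partial_s^2k-h(t)\,k^2,
\]
again using $\partial_s h=0$, which is the third identity.

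\textbf{Main obstacle.} The only delicate point is keeping signs consistent with the paper's conventions. These are the inward normal $\boldsymbol{\nu}=R\boldsymbol{\tau}$, the formulae $\partial_s\boldsymbol{\tau}=k\boldsymbol{\nu}$ and $\partial_s\boldsymbol{\nu}=-k\boldsymbol{\tau}$, and the commutator sign $\partial_t\partial_s-\partial_s\partial_t=kV\partial_s$. This commutator should be checked against the formula $\partial_t\,ds=-kV\,ds$ stated in Lemma 2.4, which follows from $\partial_t|\gamma_u|=\langle\partial_u(V\boldsymbol{\nu}),\boldsymbol{\tau}\rangle$. Note that the commutator as printed in Lemma 2.4 should be read as $-k(\partial_s^2k+h)\partial_s$. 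With that reading, every step above is a routine application of the product rule.
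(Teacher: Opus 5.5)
Your proof is correct and is the standard direct computation, which the paper does not write out but defers to reference [2]. Writing $\partial_t\gamma = V\nu$ with $V=-(h(t)+\partial_s^2 k)$, you use the commutator $\partial_t\partial_s-\partial_s\partial_t = kV\partial_s$ (rightly correcting the misprinted formula in Lemma 2.4), together with $\partial_s h=0$ and $R^2=-I$, and all three identities follow exactly as you derive them.
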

\begin{lemma}
Let $\gamma : (-1, 1) \times [0, T) \to \mathbb{R}^2$ be a solution of (4). Then
\begin{equation}
\partial_t \partial_s^l k = -\partial_s^{l+4} k + \sum_{q+r+m=1} c_{qrm} \partial_s^{q+2} k \partial_s^r k \partial_s^m k + h(t)\sum_{q+r=l} c_{qr} \partial_s^{q} k \partial_s^r k
\end{equation}
for all $l \in \mathbb{N}$, where $c_{qrm}, c_{qr} \in \mathbb{R}$
\end{lemma}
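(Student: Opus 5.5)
The plan is to argue by induction on $l$. The two inputs are the evolution equation for $k$ in the preceding lemma and the commutator formula $\partial_t\partial_s-\partial_s\partial_t=-k(\partial_s^2k+h)\partial_s$, with $h(t)=\theta^{-1}\int(\partial_sk)^2ds$. In the cubic sum I read the index condition as $q+r+m=l$; the printed ``$=1$'' is the case $l=1$ and appears to be a typo for $l$, since it is the only reading compatible with scaling. The structural fact used throughout is that $h$ depends only on $t$. Hence $\partial_s h=0$, so spatial differentiation never acts on $h$, and every $h$-term stays of the form $h\cdot(\text{quadratic in derivatives of }k)$.

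\emph{Base case $l=0$.} The preceding lemma gives
\[
\partial_t k=-\partial_s^4k-k^2\partial_s^2k-h k^2 .
\]
The term $-k^2\partial_s^2k$ is $c_{000}\,\partial_s^{2}k\,\partial_s^0k\,\partial_s^0k$ with $q=r=m=0$, so $q+r+m=0$. The term $-hk^2$ is $h\,c_{00}\,\partial_s^0k\,\partial_s^0k$ with $q+r=0$. So the formula holds for $l=0$; the case $l=1$ also follows from the inductive step below.

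\emph{Inductive step.} Assume the formula holds for $l$ and apply the commutator to $f=\partial_s^lk$:
\[
\partial_t\partial_s^{l+1}k=\partial_s\big(\partial_t\partial_s^lk\big)-k\,\partial_s^2k\,\partial_s^{l+1}k-h\,k\,\partial_s^{l+1}k .
\]
Differentiate the inductive expression term by term.
\begin{itemize}
\item The leading term gives $-\partial_s^{l+5}k$.
\item By the Leibniz rule, each cubic term $\partial_s^{q+2}k\,\partial_s^rk\,\partial_s^mk$ with $q+r+m=l$ becomes a sum of three terms of the same shape. In each, exactly one of $q,r,m$ is raised by one, so the index sum becomes $l+1$.
\item Because $\partial_sh=0$, each term $h\,\partial_s^qk\,\partial_s^rk$ with $q+r=l$ becomes $h(\partial_s^{q+1}k\,\partial_s^rk+\partial_s^qk\,\partial_s^{r+1}k)$. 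These are of the admissible form with index sum $l+1$.
\end{itemize}
The two commutator terms are absorbed as follows.
\begin{itemize}
\item $-k\,\partial_s^2k\,\partial_s^{l+1}k$ is a cubic term with $q=0$, $r=0$, $m=l+1$, so $q+r+m=l+1$.
\item $-h\,k\,\partial_s^{l+1}k$ is an $h$-term with $q=0$, $r=l+1$, so $q+r=l+1$.
\end{itemize}
Collecting like terms into new real constants $c_{qrm}$ and $c_{qr}$ gives the formula at level $l+1$.

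There is no analytic difficulty here; the only real obstacle is bookkeeping. First, the commutator must be read as the operator $-k(\partial_s^2k+h)\partial_s$. If $h$ were not multiplied by $\partial_s$, a term $hk\,\partial_s^lk$ would appear with the wrong total order. Second, the $h$-terms must not be allowed to drift into cubic form. This is exactly where spatial constancy of $h$ is used: neither the Leibniz step nor the commutator ever introduces a third factor alongside $h$. As a consistency check, under the scaling $\gamma\mapsto\lambda\gamma$ one has $h\sim\lambda^{-3}$. Hence $h\,\partial_s^qk\,\partial_s^rk$ with $q+r=l$ scales like $\lambda^{-(l+5)}$, the same as $\partial_s^{l+4}k$ and as the cubic terms with $q+r+m=l$. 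So the stated form is the scale-consistent one.
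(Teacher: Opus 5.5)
Your proof is correct and follows the same route as the paper. The paper also argues by induction on $l$ via $\partial_t\partial_s^{l+1}k=\partial_s(\partial_t\partial_s^lk)-k(\partial_s^2k+h)\partial_s^{l+1}k$, differentiating the inductive expression term by term (with $h$ independent of $s$) and absorbing the two commutator terms into the cubic and $h$-sums with index sum $l+1$. Your explicit base case $l=0$, your reading of the index condition as $q+r+m=l$, and your reading of the commutator as $-k(\partial_s^2k+h)\partial_s$ all match what the paper actually uses in its proof, where these points are left implicit or misprinted.
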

\begin{pf}
We apply mathematical induction. Assume the statement holds for $ n-1 < l$, we now prove it for \( n \),
\begin{align*}
\partial_t \partial_s^n k &= \partial_t \partial_s(\partial_s^{n-1} k ) \\
&= \partial_s \partial_t(\partial_s^{n-1} k ) - k\left(\partial_s^2 k + \frac{\int (\partial_s k)^2 ds}{\theta}\right)\partial_s^n k \\
&=\partial_s\left(-\partial_s^{n+3}k + \sum_{q'+r'+m'=n-l} c_{q'r'm'}\partial_s^{q'+2} k \partial_s^{r'} k \partial_s^{m'} k + h(t)\sum_{q'+r'=n-l} c_{q'r'} \partial_s^{q'} k \partial_s^{r'} k\right)-k\partial_s^2k\partial_s^n k-h(t)k\partial_s^n k\\
&= -\partial_s^{l+4} k + \sum_{q+r+m=n} c_{qrm} \partial_s^{q+2} k \partial_s^r k \partial_s^m k + h(t)\sum_{q+r=n} c_{qr} \partial_s^{q} k \partial_s^r k.
\end{align*}
\end{pf}

\section{Fundamental Properties of Local-in-Time Solutions of (4)}
\noindent The existence of local-in-time solutions of (4) can be proved by using standard procedures. Indeed, since $\eta_1$ and $\eta_2$ are two skewed straight lines, we can prove the existence of local-in-time solutions of (4) as in [2]. In this section, we collect some basic properties of local-in-time solutions of (4).
\begin{lemma}
Let $\gamma : (-1, 1) \times [0, T) \to \mathbb{R}^2$ be a solution of (4). Then
\[
\int_\gamma k \, ds = \int_{\gamma_0} k_0 \, ds = \theta
\]
for all $t \in [0, T)$, where $k_0$ denotes the curvature of initial curve $\gamma_0$.
\end{lemma}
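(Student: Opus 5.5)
The natural route is to compute $\frac{d}{dt}\int_\gamma k\,ds$ directly from the evolution equations in the preceding lemmas and show it vanishes. Write $h(t)=\theta^{-1}\int(\partial_s k)^2ds$. By the lemma giving $\partial_t k$ and the formula for $\partial_t ds$,
\begin{align*}
\frac{d}{dt}\int_\gamma k\,ds &= \int_\gamma \bigl(\partial_t k + k^2(\partial_s^2 k + h)\bigr)\,ds \\
&= \int_\gamma \bigl(-\partial_s^4 k - k^2\partial_s^2 k - h k^2 + k^2\partial_s^2 k + h k^2\bigr)\,ds = -\int_\gamma \partial_s^4 k\,ds .
\end{align*}
The $h$-terms and the $k^2\partial_s^2 k$ terms cancel exactly, so the Lagrange multiplier plays no role here. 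If instead one uses the more standard formula $\partial_t k=-\partial_s^2 V - k^2V$ with $V=\partial_s^2k+h$, then $\partial_s^2 h=0$ because $h$ depends only on $t$, and the computation is the same.

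Hence
\[
\frac{d}{dt}\int_\gamma k\,ds = -\bigl[\partial_s^3 k\bigr]_{u=-1}^{u=1}.
\]
It remains to show $\partial_s^3 k(\pm1,t)=0$, and this is the only real step. My plan is to use the boundary conditions in (4). The endpoint $\gamma(-1,t)$ stays on the line $\eta_0$, so $\partial_t\gamma(-1,t)$ is parallel to $\eta_0$. Since $\boldsymbol{\nu}$ is orthogonal to $\boldsymbol{\nu}_{\eta_0}$ at the endpoint, $\boldsymbol\nu(-1,t)$ is itself parallel to $\eta_0$, which is consistent with $\partial_t\gamma=-V\boldsymbol\nu$, so that condition alone gives nothing new.

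Instead I will differentiate the angle condition in time. Since $\langle\boldsymbol{\nu}(-1,t),\boldsymbol{\nu}_{\eta_0}\rangle\equiv 0$ and $\boldsymbol\nu_{\eta_0}$ is constant, we get $\langle\partial_t\boldsymbol{\nu},\boldsymbol\nu_{\eta_0}\rangle=0$ at $u=-1$. The lemma gives $\partial_t\boldsymbol\nu=\partial_s^3k\,\boldsymbol\tau$, and at the endpoint $\boldsymbol\tau=\pm\boldsymbol\nu_{\eta_0}$ because $\boldsymbol\nu$ is tangent to $\eta_0$. Therefore $\partial_s^3k(-1,t)=0$, and the same argument on $\eta_\theta$ gives $\partial_s^3k(1,t)=0$.

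There is one point to watch: the correct time derivative of $\boldsymbol{\nu}$ should be $\partial_t\boldsymbol\nu=-\partial_s V\,\boldsymbol\tau$ with $\partial_s V=\partial_s^3k$, since $\partial_s h=0$, so the sign conventions agree up to a sign that is irrelevant here. I also need the solution to be smooth up to the boundary so that the time differentiation at $u=\pm1$ is legitimate; I take this from the local existence theory.

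Combining the two steps, $\frac{d}{dt}\int_\gamma k\,ds=0$ on $[0,T)$. Integrating in time gives $\int_\gamma k\,ds=\int_{\gamma_0}k_0\,ds$, which equals $\theta$ by the hypothesis of the main theorem. Alternatively one can argue geometrically: $\int_\gamma k\,ds$ is the total turning angle of $\boldsymbol\tau$, and the boundary conditions force the tangents at the endpoints to stay perpendicular to the two fixed lines, so the turning angle can only jump by multiples of $\pi$ and therefore stays constant by continuity. I will use the computation above as the main proof and the geometric argument as a check. The main obstacle is the boundary-term step, namely deducing $\partial_s^3k=0$ at the endpoints, which is where the orthogonality condition in (4) is essential.
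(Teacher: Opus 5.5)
Your proposal is correct and follows essentially the same route as the paper: the $k^2\partial_s^2k$ and $h$-terms cancel to leave $-\int_\gamma\partial_s^4k\,ds=-[\partial_s^3k]_{u=-1}^{u=1}$. Differentiating $\langle\boldsymbol{\nu}(-1,t),\boldsymbol{\nu}_{\eta_0}\rangle=0$ (and its analogue at $u=1$) in time, with $\partial_t\boldsymbol{\nu}=\partial_s^3k\,\boldsymbol{\tau}$ and $\boldsymbol{\tau}=\pm\boldsymbol{\nu}_{\eta_0}$ at the endpoint, then gives $\partial_s^3k(\pm1,t)=0$. The only slip is in your aside on signs: with $\boldsymbol{\nu}=R\boldsymbol{\tau}$ and $\partial_t\gamma=-V\boldsymbol{\nu}$ one gets $\partial_t\boldsymbol{\tau}=-\partial_sV\,\boldsymbol{\nu}$ and hence $\partial_t\boldsymbol{\nu}=+\partial_sV\,\boldsymbol{\tau}$, exactly as in the paper, though, as you note, the sign is irrelevant here.
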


\begin{pf}
Differentiating the boundary condition
\[
\langle \boldsymbol{v}(-1, t), \boldsymbol{v}_{\eta_0} \rangle = 0
\]
with respect to $t$, we deduce from Lemma 2.5 that
\[
0 = \langle \partial_t \boldsymbol{v}(-1, t), \boldsymbol{v}_{\eta_0} \rangle + \langle \boldsymbol{v}(-1, t), \partial_t \boldsymbol{v}_{\eta_0} \rangle =  \langle \partial_s^3 k(-1, t) \boldsymbol{\tau}(-1, t), \boldsymbol{v}_{\eta_0} \rangle = \partial_s^3 k(-1, t) \langle \boldsymbol{\tau}(-1, t), \boldsymbol{v}_{\eta_0} \rangle = \partial_s^3 k(-1, t).
\]
Similarly, we obtain $\partial_s^3 k(1, t) = 0$. This together with Lemmas 2.4 and 2.5 implies that
\[
\begin{split}
\frac{d}{dt} \int_\gamma k \, ds &= \int_\gamma \left[ \partial_t k + k^2 \partial_s^2 k + k^2h(t)\right] ds = -\int_\gamma \partial_s^4 k \, ds = -\left[ \partial_s^3 k(u, t) \right]_{u=-1}^{u=1} = 0.
\end{split}
\]
This completes the proof.
\end{pf}

In the proof of Lemma 3.1 we get the boundary condition on $\partial_s^3 k$. Thanks to Lemma 2.6 we find the boundary condition on higher order derivatives of $k$.
\begin{lemma}
Let $\gamma : (-1, 1) \times [0, T) \to \mathbb{R}^2$ be a solution of (4). Then
\begin{equation}
\frac{d}{dt} L[\gamma(t)] = 0
\end{equation}
\begin{equation}
\frac{d}{dt} A[\gamma(t)] = h(t)L_0 = \frac{L_0}{\theta}\int_\gamma (\partial_s k)^2 ds,
\end{equation}
for all $t \in [0, T)$.
\end{lemma}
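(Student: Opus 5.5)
The plan is to differentiate $L$ and $A$ directly, using the evolution of the line element from Lemma 2.4, the boundary condition $\partial_s k(\pm1,t)=0$, and the conservation of total curvature from Lemma 3.1. The choice $h(t)=\frac{1}{\theta}\int_\gamma(\partial_s k)^2ds$ is made precisely so that the length derivative vanishes.

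\emph{Length.} Since $L[\gamma(t)]=\int_{-1}^1|\partial_u\gamma|\,du$, Lemma 2.4 gives
\[
\frac{d}{dt}L[\gamma(t)]=\int_\gamma k\,\partial_s^2k\,ds+h(t)\int_\gamma k\,ds .
\]
For the first term I integrate by parts: $\int_\gamma k\,\partial_s^2 k\,ds=[k\,\partial_s k]_{u=-1}^{u=1}-\int_\gamma(\partial_s k)^2ds$. The boundary term vanishes because $\partial_s k(\pm1,t)=0$ in (4). For the second term, Lemma 3.1 gives $\int_\gamma k\,ds=\theta$, so it equals $h(t)\theta=\int_\gamma(\partial_sk)^2ds$. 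The two contributions cancel, hence $L[\gamma(t)]\equiv L_0$.

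\emph{Area.} I write $A=-\frac12\int_{-1}^1\langle\gamma,R\gamma_u\rangle\,du$, since $R(\gamma_u/|\gamma_u|)\,|\gamma_u|=R\gamma_u$ removes the normalisation. Differentiating in $t$ gives
\[
\frac{d}{dt}A=-\frac12\int_{-1}^1\big(\langle\gamma_t,R\gamma_u\rangle+\langle\gamma,R\gamma_{ut}\rangle\big)\,du .
\]
In the second term I integrate by parts in $u$ and use the antisymmetry $\langle a,Rb\rangle=-\langle Ra,b\rangle$, which turns $-\langle\gamma_u,R\gamma_t\rangle$ into $\langle\gamma_t,R\gamma_u\rangle$. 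The result is
\[
\frac{d}{dt}A=-\int_\gamma\langle\gamma_t,\boldsymbol{\nu}\rangle\,ds-\frac12\big[\langle\gamma,R\gamma_t\rangle\big]_{u=-1}^{u=1}.
\]
The boundary term vanishes for the following reason. Each line $\eta_\alpha$ passes through the origin, and $\gamma(\pm1,t)\in\eta_\alpha$ for all $t$. Hence both $\gamma(\pm1,t)$ and $\gamma_t(\pm1,t)$ are parallel to $(\cos\alpha,\sin\alpha)$, so $\langle\gamma,R\gamma_t\rangle=0$ there. Inserting $\gamma_t=-(h+\partial_s^2k)\boldsymbol{\nu}$ then gives
\[
\frac{d}{dt}A=\int_\gamma(h+\partial_s^2k)\,ds=h(t)L[\gamma(t)]+[\partial_sk]_{u=-1}^{u=1}=h(t)L_0,
\]
using the length conservation just proved and again $\partial_sk(\pm1,t)=0$.

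The main point needing care is the sign bookkeeping in the area formula, namely the orientation of $\boldsymbol{\nu}$ versus the minus sign in the definition of $A$. I would check it against a circular arc shrinking or expanding inside the sector, so that the final sign agrees with the statement.
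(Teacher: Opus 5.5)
Your proof is correct. The length computation is the same as the paper's. For the area you take a different and more economical route.

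The paper writes $A=-\frac12\int_\gamma\langle\gamma,\boldsymbol{\nu}\rangle\,ds$ and differentiates $\gamma$, $\boldsymbol{\nu}$ and $ds$ separately, using $\partial_t\boldsymbol{\nu}=\partial_s^3k\,\boldsymbol{\tau}$ and the evolution of $ds$ from Lemmas 2.4--2.5. It then integrates $\partial_s^3k\langle\gamma,\boldsymbol{\tau}\rangle$ by parts and uses $\partial_s\langle\gamma,\boldsymbol{\tau}\rangle=1+k\langle\gamma,\boldsymbol{\nu}\rangle$ to evaluate $\int_\gamma k\langle\gamma,\boldsymbol{\nu}\rangle\,ds=-L_0$. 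All boundary terms are removed via $\langle\gamma,\boldsymbol{\tau}\rangle(\pm1,t)=0$, which uses the orthogonality condition as well as the fact that both lines pass through $O$.

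You instead work with $-\frac12\int_{-1}^1\langle\gamma,R\gamma_u\rangle\,du$, which is the area functional the paper actually differentiates, with the line element built in. A single integration by parts in $u$ and the antisymmetry of $R$ give the general first-variation formula $\frac{d}{dt}A=-\int_\gamma\langle\gamma_t,\boldsymbol{\nu}\rangle\,ds$, valid for any velocity field. The only boundary term is $\langle\gamma,R\gamma_t\rangle$, and it vanishes simply because each endpoint and its velocity are collinear on a line through $O$. The orthogonality condition is not needed at this step, and the specific flow enters only in the last line.

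What this buys is that you never need the evolution of $\boldsymbol{\nu}$ or $ds$, and there are fewer places for sign errors. Indeed, in the paper's intermediate step the term $-\frac12h(t)\int_\gamma k\langle\gamma,\boldsymbol{\nu}\rangle\,ds$ equals $+\frac12h(t)L_0$, not $-\frac12h(t)L_0$ as written, although the final formula is right.

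Your proposed sign check also works out. For the arc $r(\cos\phi,\sin\phi)$, $\phi\in[0,\theta]$, the normal $\boldsymbol{\nu}$ points to $O$, $A=\theta r^2/2$ and $\gamma_t=-\dot r\,\boldsymbol{\nu}$. Hence $\frac{d}{dt}A=\theta r\dot r=-\int_\gamma\langle\gamma_t,\boldsymbol{\nu}\rangle\,ds$, which confirms your formula and hence $\frac{d}{dt}A=h(t)L_0>0$.
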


\begin{pf}
Recalling that solutions of (4) are smooth, we observe from (11), Lemma3.1 and the boundary condition in (4) that
\begin{equation*}
\frac{d}{dt} L[\gamma(t)] = \int_\gamma \partial_t ds = \int_\gamma k \partial_s^2 k + h(t)k\, ds = -\int_\gamma (\partial_s k)^2 \, ds + \int_\gamma (\partial_s k)^2 \, ds = 0.
\end{equation*}
Thus (17) follows. Similarly, it follows from (3), Lemmas 2.4 and 2.5 that
\[
\begin{split}
\frac{d}{dt} A[\gamma(t)] &= -\frac{1}{2} \int_\gamma \left[ \langle \partial_t \gamma, \boldsymbol{v} \rangle + \langle \gamma, \partial_t \boldsymbol{v} \rangle \right] ds - \frac{1}{2} \int_\gamma \langle \gamma, \boldsymbol{v} \rangle \partial_t ds \\
&= -\frac{1}{2} \int_\gamma \left[ -\partial_s^2 k - h(t) + \partial_s^3 k \langle \gamma, \boldsymbol{\tau} \rangle \right] ds - \frac{1}{2} \int_\gamma [k \partial_s^2 k + kh(t)]\langle \gamma, \boldsymbol{v} \rangle \, ds.
\end{split}
\]
Integrating by part, we obtain
\[
-\frac{1}{2}\int_\gamma \partial_s^3 k \langle \gamma, \boldsymbol{\tau} \rangle \, ds = \left. -\frac{1}{2}[\partial_s^2 k \langle \gamma, \boldsymbol{\tau} \rangle] \right|_{u=-1}^{u=1} + \frac{1}{2}\int_\gamma \partial_s^2 k \left[ 1 + k \langle \gamma, \boldsymbol{v} \rangle \right] ds.
\]
By using (3), we obtain
\begin{align*}
-\frac{1}{2}h(t)\int_\gamma k\langle \gamma, \boldsymbol{\nu} \rangle \, ds &= -\frac{1}{2}h(t)\int_\gamma \langle \gamma, \partial_s\boldsymbol{\tau} \rangle \, ds = -\frac{1}{2}h(t)[\partial_s(\int_\gamma \langle \gamma, \boldsymbol{\tau}\rangle \,ds)- \int_\gamma\langle\partial_s\gamma, \boldsymbol{\tau}\rangle\, ds]\\
&= -\frac{1}{2}h(t)[\partial_s(\int_\gamma \langle \gamma, \boldsymbol{\tau}\rangle \,ds)- \int_\gamma\langle\boldsymbol{\tau}, \boldsymbol{\tau}\rangle\, ds]\\
&= -\frac{1}{2}h(t)L_0.
\end{align*}
we see that
\begin{equation*}
\frac{d}{dt} A[\gamma(t)] = \left. -\frac{1}{2}[\partial_s^2 k \langle \gamma, \boldsymbol{\tau} \rangle] \right|_{u=-1}^{u=1} + \int_\gamma\partial_s^2k\, ds + h(t)L_0.
\end{equation*}
From the boundary conditions $\partial_s k(-1, t) = 0$ and $\partial_s k(1, t) = 0$, it follows that for all $t \in [0, T)$, we have
\[
\int_\gamma \partial_s^2 k \, ds = \partial_s k(1, t) - \partial_s k(-1, t) = 0.
\]
Recalling that $\eta_0 \cap \eta_\theta = O$, where $O$ denotes the origin of $\mathbb{R}^2$, we see that the boundary conditions $\gamma(-1, t) \in \eta_0$, $\gamma(1, t) \in \eta_\theta$, and $\langle \boldsymbol{v}(-1, t), \boldsymbol{v}_{\eta_0} \rangle = \langle \boldsymbol{v}(1, t), \boldsymbol{v}_{\eta_\theta} \rangle = 0$ imply that $\langle \gamma(1, t), \boldsymbol{\tau}(1, t) \rangle = \langle \gamma(-1, t), \boldsymbol{\tau}(-1, t) \rangle = 0$. This clearly gives
\[
\frac{d}{dt} A[\gamma(t)] = \frac{L_0}{\theta}\int_\gamma (\partial_s k)^2 ds > 0,
\]
and we complete the proof.
\end{pf}

\begin{lemma}
Let $\gamma : (-1, 1) \times [0, T) \to \mathbb{R}^2$ be a solution of (4), then for every  $l \in \mathbb{N} \cup \{0\}$, we have
\[
\partial_s^{2l+1} k(-1, t) = \partial_s^{2l+1} k(1, t) = 0
\]
for all $t \in [0, T)$.
\end{lemma}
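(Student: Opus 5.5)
We argue by induction on $l$, following the pattern already used in the proof of Lemma 3.1: each new vanishing boundary condition comes from differentiating an already known one in time and using the evolution equation. The base cases $l=0$ and $l=1$ are already available. For $l=0$, the condition $\partial_s k(\pm1,t)=0$ is part of (4). For $l=1$, the identity $\partial_s^3 k(\pm1,t)=0$ was derived in the proof of Lemma 3.1, by differentiating the orthogonality condition $\langle \boldsymbol{v}(\pm1,t),\boldsymbol{v}_{\eta}\rangle=0$ in $t$ and using Lemma 2.5.

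For the inductive step, assume that $\partial_s^{2j+1}k(\pm1,t)=0$ for all $j\le l$ and all $t\in[0,T)$. We aim to show that $\partial_s^{2l+5}k(\pm1,t)=0$; combined with the $l=1$ case this gives every odd order. Since $\partial_s^{2l+1}k(\pm1,t)\equiv 0$ in $t$, and the solution is smooth up to the boundary, we have
\[
0=\partial_t\bigl(\partial_s^{2l+1}k\bigr)(\pm1,t).
\]
We expand the right-hand side with Lemma 2.6 (taking the index sums there as $q+r+m=2l+1$ and $q+r=2l+1$, as the inductive computation in its proof actually produces). This gives
\[
0=-\partial_s^{2l+5}k+\sum_{q+r+m=2l+1}c_{qrm}\,\partial_s^{q+2}k\,\partial_s^{r}k\,\partial_s^{m}k+h(t)\sum_{q+r=2l+1}c_{qr}\,\partial_s^{q}k\,\partial_s^{r}k
\]
at $u=\pm1$.

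The key step is a parity count showing that every lower-order term vanishes at the boundary. In the cubic terms, the derivative orders add up to $(q+2)+r+m=2l+3$, which is odd. A sum of three nonnegative integers is odd only if at least one of them is odd, and each order is at most $2l+3$. In the quadratic terms, $q+r=2l+1$ is odd, so one of $q,r$ is odd and at most $2l+1$. Hence each term contains a factor $\partial_s^{2j+1}k$ with $2j+1\le 2l+3$. By the induction hypothesis, together with the case $\partial_s^{3}k$ and one more application of the step if needed, each such factor vanishes at $u=\pm1$. The factor $h(t)$ depends only on $t$ and does not affect this. Therefore $\partial_s^{2l+5}k(\pm1,t)=0$.

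To close the induction cleanly, we run it with the two-step hypothesis "$\partial_s^{2j+1}k(\pm1,t)=0$ for all $2j+1\le 2l+3$", so that the orders up to $2l+3$ are already covered when the lower-order terms are handled.

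The main obstacle is making the structure in Lemma 2.6 precise, since the statement there has a typo in its index constraint. We must verify that the derivative orders in every nonlinear term sum to $l+2$ for the cubic part and to $l$ for the $h(t)$ part. This homogeneity is what drives the parity argument. It follows from the scaling of the commutator formula in Lemma 2.4 and the evolution of $k$ in Lemma 2.5, with each $\partial_s$ adding one order and $h$ carrying no $s$-derivatives. The claim should be checked by tracking orders through the induction in the proof of Lemma 2.6. We also need smoothness up to the boundary to justify commuting $\partial_t$ with evaluation at $u=\pm1$; this holds for the smooth solutions considered.
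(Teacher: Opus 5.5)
Your proposal is correct and is essentially the paper's own proof. The paper also assumes that the odd derivatives up to order $2l+3$ vanish at $u=\pm1$, differentiates $\partial_s^{2l+1}k(\pm1,t)=0$ in $t$, and expands with Lemma 2.6, reading the index constraint as $q+r+m=2l+1$ and $q+r=2l+1$ (the paper writes $2l+1=2n-3$). It then concludes by the same parity count (odd total order in every lower-order term) and the same base cases $\partial_s k=0$ from (4) and $\partial_s^3k=0$ from Lemma 3.1.

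Your homogeneity check is the right thing to verify, and it works if you use the commutator $\partial_t\partial_s-\partial_s\partial_t=-k(\partial_s^2k+h)\partial_s$ from the proof of Lemma 2.6. The misprinted form in (12) would give an $h$-term $hk\,\partial_s^{l-1}k$ of the wrong order, and the parity argument would break.
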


\begin{pf}
We proceed by mathematical induction.For odd integer $l \leq 2n-1$, the partial derivative $\partial_s^l k$ vanishes at the boundary(i.e., $\partial_s^l k(-1, t) = \partial_s^l k(1, t)$. Now take $l = 2n-3$.Then we have (evaluating at the boundary)
\[
0 = \partial_t \partial_s^{2n-3} k(\pm 1, t) = -\partial_s^{2n+1} k(\pm 1, t) + \sum_{\substack{q + r + m = 2n-3}} c_{qrm} \partial_s^{q+2} k \partial_s^r k \partial_s^m k(\pm 1, t) + h(t) \sum_{\substack{q + r = 2n-3}} c_{qr} \partial_s^q k \partial_s^r k(\pm 1, t),
\]\\
and then,
\[
\partial_s^{2n+1} k(\pm 1, t) = \sum_{\substack{q + r + m = 2n-3}} c_{qrm} \partial_s^{q+2} k \partial_s^r k \partial_s^m k (\pm 1, t)+ h(t) \sum_{\substack{q + r = 2n-3}} c_{qr} \partial_s^q k \partial_s^r k(\pm 1, t).
\]
Since \( q + 2 + r + m = 2n - 1 \), at least one of \( q + 2 \), \( r \), and \( m \) is an odd integer, and all of them are less than or equal to \( 2n - 1 \), therefore, the first term on the right-hand side equals zero. Similarly, since \( q + r = 2n - 3 \), \( q \) and \( r \) must be one odd and one even.

Therefore, all terms in the sums vanish at the boundary, implying the boundary values of $\partial_s^{2n+1} k$ are time-invariant. Since they are zero in the base case, we conclude:
\[
\partial_s^{2n+1} k(-1, t) = \partial_s^{2n+1} k(1, t) = 0
\]
for all $t \in [0, T)$. This completes the proof.
\end{pf}

\section{Existence of Global-in-Time Solutions}
\noindent Tn this section, we prove the existence of global-in-time solution of (4). To do this, we make use of the isoperimetric inequality for open curves shown by Fuya Hiroi\cite{1}.
\begin{lemma}\cite{1}
Let $\theta \in (0, \pi]$. Let
\[
\tilde{X}_\theta := \left\{ \gamma \in C^1\left([0, 1]; \mathbb{R}^2\right) \mid \gamma(0) \in \eta_0,\ \gamma(1) \in \eta_\theta, \right. \\
\left. \langle v(0), v_{\eta_0} \rangle = \langle v(1), v_{\eta_\theta} \rangle = 0,\ L[\gamma] > 0 \right\}.
\]
For any $\gamma \in \tilde{X}_\theta$, the following isoperimetric inequality holds:
\[
L[\gamma]^2 \geq 2\theta A[\gamma].
\]
The equality is attained if and only if $\gamma$ is a counterclockwise circular arc of central angle $\theta$.
\end{lemma}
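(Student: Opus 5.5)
If $A[\gamma]\le 0$ there is nothing to prove, so we assume $A[\gamma]>0$. First we identify $A[\gamma]$ with a planar area. Since $-\langle\gamma,R\tau\rangle=\gamma\times\tau$, we have $A[\gamma]=\frac12\int_\gamma \gamma\times\tau\,ds$. The closing segments $[O,\gamma(1)]\subset\eta_\theta$ and $[\gamma(0),O]\subset\eta_0$ contribute nothing to this integral, because $\gamma\times\tau=0$ along a line through the origin. Hence $A[\gamma]=\int_{\mathbb{R}^2}w(x)\,dx$, where $w$ is the winding number of the closed curve $\Gamma$ obtained by concatenating $\gamma$ with the two segments. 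In the embedded case $\gamma$ lies in the closed wedge $W_\theta=\{r e^{i\phi}:0\le\phi\le\theta\}$. Then $\Gamma$ bounds a region $E\subset W_\theta$ with $|E|=A[\gamma]$ and with relative perimeter $P(E;\operatorname{int}W_\theta)=L[\gamma]$. The claim therefore reduces to the relative isoperimetric inequality in the convex cone $W_\theta$,
\[
P(E;\operatorname{int} W_\theta)^2\ \ge\ 2\theta\,|E|,
\]
which is the Lions--Pacella inequality. Its model case is the sector $E=\{r<\rho\}\cap W_\theta$, for which $P=\theta\rho$ and $|E|=\theta\rho^2/2$, so equality holds.

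To prove the cone inequality I would use the ABP argument of Cabr\'e--Ros-Oton--Serra. The steps are as follows.
\begin{enumerate}
\item Let $u$ solve the Neumann problem $\Delta u=L/|E|$ in $E$, with $\partial_n u=1$ on $\gamma$ and $\partial_n u=0$ on $\partial W_\theta\cap\partial E$, where $n$ is the outer normal. The data are compatible by the divergence theorem.
\item Let $\Lambda$ be the lower contact set, i.e. the set of $x$ where $u$ admits a supporting affine function from below. Show that $\nabla u(\Lambda)\supset B_1\cap W_\theta$.
\item For the covering, fix $p\in B_1\cap W_\theta$ and minimise $u(x)-p\cdot x$ over $\overline E$. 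At a minimum on $\gamma$ the outer derivative would be $1-p\cdot n>0$, which is impossible. At a minimum on a flat side we use $\partial_n u=0$ together with $p\cdot n\le 0$, valid since $p\in W_\theta$ and $W_\theta$ is convex. So the minimum is interior and $p=\nabla u(x)$ with $D^2u(x)\ge 0$.
\item Apply the area formula and AM--GM to get
\[
\tfrac{\theta}{2}=|B_1\cap W_\theta|\le\int_\Lambda\det D^2u\le\int_E\Big(\tfrac{\Delta u}{2}\Big)^2=\tfrac{L^2}{4|E|},
\]
which is exactly $L^2\ge 2\theta|E|$.
\item For equality, every inequality above must be sharp. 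Then $D^2u$ is a multiple of the identity, $u$ is a radial quadratic centred at a point which the Neumann condition on both lines forces to be $O$, and so $\gamma$ is a circular arc centred at $O$ of central angle $\theta$, traversed counterclockwise because $A>0$.
\end{enumerate}
The orthogonality conditions in $\tilde X_\theta$ are not needed for the inequality itself. They are, however, consistent with the equality case, since the arc meets both lines perpendicularly. To allow $C^1$ data, I would first run the argument for a smooth approximation and pass to the limit.

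The main obstacle is that a general $\gamma\in\tilde X_\theta$ need not be embedded and need not stay inside $W_\theta$. To handle this I would decompose $\Gamma$ into the level sets $\{w\ge j\}$, $j\ge1$, of its winding number. Using the coarea structure, $L[\gamma]\ge\sum_j P(\{w\ge j\})$, with the parts on the lines excluded, while $A=\sum_j|\{w\ge j\}|$. Applying the cone inequality to each level set and using $\sum_j\sqrt{a_j}\ge\sqrt{\sum_j a_j}$ then recovers $L^2\ge2\theta A$.

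The delicate point is that parts of $\{w\ge j\}$ lying outside $W_\theta$ are not covered by the cone inequality. For these I would use the plain isoperimetric inequality $P^2\ge4\pi|E|\ge 2\theta|E|$, valid since $\theta\le\pi$. One then has to check that the segments on $\eta_0,\eta_\theta$ are never counted as perimeter in either case. I expect this bookkeeping, together with ruling out equality in the non-embedded case, to be the hardest step.
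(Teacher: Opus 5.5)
The paper gives no proof of this lemma; it imports it from \cite{1}, so your argument has to stand on its own. What you actually carry out is a special case: the closed curve $\Gamma$ is simple and bounds a region inside the wedge $W_\theta$. There the claim is indeed the Lions--Pacella inequality, and your ABP sketch is the Cabr\'e--Ros-Oton--Serra proof. (In step 3 take $p$ in the \emph{open} cone, so that $p\cdot n<0$ on the flat sides; with only $p\cdot n\le 0$ you get no contradiction.) But this case is much narrower than the lemma. The $\eta_\alpha$ are full lines, so $\gamma(0)$ may lie on the negative half of $\eta_0$ while $\gamma(1)$ lies on the ray of angle $\theta$. Even a simple $\gamma$ whose endpoints lie on the two rays bounding $W_\theta$ can leave $W_\theta$. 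Nothing in $\tilde X_\theta$ gives embeddedness. The lemma is used in Lemma 4.3 along a fourth-order flow that does not preserve embeddedness, so the general case is the real content, and that is where your proposal has a genuine gap.

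Your proposed patch does not close that gap.
\begin{itemize}
\item If you cut $\{w\ge j\}$ along $\partial W_\theta$, the outer piece gains boundary on the two rays wherever $\{w\ge j\}$ crosses them. The plain isoperimetric inequality charges this boundary, but it is not part of $\gamma$, so the bound $\sum_j P(\{w\ge j\})\le L[\gamma]$ is lost.
\item If you do not cut, components straddling $\partial W_\theta$ are covered by neither inequality. Components outside $W_\theta$ can also have free boundary on a segment lying on the negative half of a line.
\item Up to rotation by $\pi$, there is a mixed configuration: $\gamma(0)$ on the ray of angle $0$ and $\gamma(1)$ on the ray of angle $\pi+\theta$. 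The free segments then bound a non-convex cone of angle $\pi+\theta$ on one side and a wedge of angle $\pi-\theta$ on the other. For $\theta>\pi/2$, that wedge's relative isoperimetric constant $2(\pi-\theta)$ is below $2\theta$. Any level-set argument must therefore use that $\Gamma$ traverses both segments with its larger winding number on the $(\pi+\theta)$-side, and your sketch never uses this.
\item Passing to smooth approximations loses the equality case for $C^1$ curves.
\end{itemize}

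A Hurwitz-type argument avoids all of this. Parametrize $\gamma$ by arc length as $z:[0,L]\to\mathbb{C}$ and extend by $z(-s):=\overline{z(s)}$, which is continuous because $z(0)\in\mathbb{R}$. Since $z(L)\in e^{i\theta}\mathbb{R}$, you get $z(L)=e^{2i\theta}z(-L)$. Hence $e^{-i\theta s/L}z$ is $2L$-periodic, and $z=\sum_n c_n e^{i\mu_n s}$ on $[-L,L]$ with $\mu_n=(\theta+n\pi)/L$. Orthogonality gives $\sum_n |c_n|^2\mu_n^2=1$. The reflected half carries the same signed area, so $A=\frac{L}{2}\sum_n |c_n|^2\mu_n$. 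Because $\theta\le\pi$, no $\mu_n$ lies in $(0,\theta/L)$. Hence $\mu_n\le L\mu_n^2/\theta$ for every $n$, which gives $A\le L^2/(2\theta)$. For $\theta<\pi$, equality forces $z=c_0e^{i\theta s/L}$, the arc centred at $O$. This argument needs neither embeddedness nor the orthogonality conditions.
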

\begin{lemma}
Let $\gamma : (-1, 1) \times [0, T) \to \mathbb{R}^2$ be a solution of (4). Then
\begin{align*}
\frac{d}{dt} \operatorname{Kosc}[\gamma] &= -2L_0 \|\partial_s^2 k\|_2^2 + 3L_0 \int_\gamma (k - \bar{k})^2 (\partial_s k)^2 \, ds + 6\bar{k}L_0 \int_\gamma (k - \bar{k})(\partial_s k)^2 \, ds + 2\bar{k}^2 L_0 \|\partial_s k\|_2^2 \\
&- h(t)L_0\left[ \int_\gamma (k - \bar{k})^3\, ds + 3\bar{k}(k - \bar{k})^2\, ds \right]
\end{align*}
for all \( t \in [0, T) \).
\begin{pf}
First, by Lemma3.1 and Lemma3.2 we have
\begin{equation*}
\frac{d}{dt} \bar{k} = \frac{d}{dt}\frac{\int_\gamma k\, ds}{L} = -\frac{\frac{dL}{dt}}{L^2}\int_\gamma k\, ds = 0.
\end{equation*}
This together with  Lemmas 2.4, 2.5, and 3.2 implies that
\begin{align*}
\frac{d}{dt} \text{Kosc}[\gamma] &= \frac{d}{dt} L[\gamma] \int_\gamma (k - \bar{k})^2 \, ds + L_0 \int_\gamma \partial_t (k - \bar{k})^2 \, ds + L_0 \int_\gamma (k - \bar{k})^2 \partial_t \, ds \\
&=-2L_0\int_\gamma \partial_s^4k(k - \bar{k})\, ds - 2L_0\int_\gamma k^2\partial_s^2k(k - \bar{k})\, ds - 2h(t)L_0\int_\gamma k^2(k - \bar{k})\, ds \\
&+ L_0\int_\gamma k\partial_s^2 k (k - \bar{k})^2\, ds+ h(t)L_0 \int_\gamma k (k - \bar{k})^2\, ds.
\end{align*}
Integrating by parts, we deduce from Lemma3.3 that
\[
L[\gamma] \int_\gamma (k - \bar{k})(-\partial_s^4 k) \, ds = -L[\gamma] \|\partial_s^2 k\|_2^2,
\]
\[
L[\gamma] \int_\gamma (k - \bar{k})(-k^2 \partial_s^2 k) \, ds = L[\gamma] \int_\gamma k^2 (\partial_s k)^2 \, ds + 2L[\gamma] \int_\gamma k (k - \bar{k}) (\partial_s k)^2 \, ds,
\]
\[
\begin{split}
L[\gamma] \int_\gamma k (k - \bar{k})^2 \partial_s^2 k \, ds &= -L[\gamma] \int_\gamma (k - \bar{k})^2 (\partial_s k)^2 \, ds - 2L[\gamma] \int_\gamma k (k - \bar{k}) (\partial_s k)^2 \, ds,
\end{split}
\]
and then,
\begin{align*}
\frac{d}{dt} \text{Kosc}[\gamma] &= -2L_0 \|\partial_s^2 k\|_2^2 + 4L_0\int_\gamma k^2(\partial_s k)^2\, ds - 2L_0\bar{k}\int_\gamma k(\partial_s k)^2\, ds - L_0\int_\gamma (\partial_s k)^2 \bar{k})^2\, ds\\
&-2h(t)L_0\int_\gamma k^2(k - \bar{k})\, ds + h(t)L_0 \int_\gamma k (k - \bar{k})^2\, ds\\
&= -2L_0 \|\partial_s^2 k\|_2^2 + 3L_0 \int_\gamma (k - \bar{k})^2 (\partial_s k)^2 \, ds + 6\bar{k}L_0 \int_\gamma (k - \bar{k})(\partial_s k)^2 \, ds + 2\bar{k}^2 L_0 \|\partial_s k\|_2^2\\
& -2h(t)L_0\int_\gamma k^2(k - \bar{k})\, ds + h(t)L_0 \int_\gamma k (k - \bar{k})^2\, ds\\
&= -2L_0 \|\partial_s^2 k\|_2^2 + 3L_0 \int_\gamma (k - \bar{k})^2 (\partial_s k)^2 \, ds + 6\bar{k}L_0 \int_\gamma (k - \bar{k})(\partial_s k)^2 \, ds + 2\bar{k}^2 L_0 \|\partial_s k\|_2^2 \\
&- h(t)L_0\left[ \int_\gamma (k - \bar{k})^3\, ds + 3\bar{k}(k - \bar{k})^2\, ds \right],
\end{align*}
where we used \( \int_\gamma [k - \bar{k}] \, ds = 0 \). Thus we complete the proof.
\end{pf}
\end{lemma}
Using Lemma~4.2, we show an uniform estimate of $\text{Kosc}[\gamma]$.
\begin{lemma}
Let $\gamma : (-1, 1) \times [0, T) \to \mathbb{R}^2$ be a solution of (4) with initial data $\gamma_0$ satisfying (5) and (6). Then
\[
\operatorname{Kosc}[\gamma] < 2K^* \quad \text{for all } t \in [0, T).
\]
\end{lemma}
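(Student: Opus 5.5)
The argument will be a continuity (bootstrap) argument driven by Lemma~4.2. Since $\operatorname{Kosc}[\gamma_0]<K^*<2K^*$ and $\operatorname{Kosc}$ is continuous in $t$, set
\[
t^*:=\sup\{t\in[0,T): \operatorname{Kosc}[\gamma(\tau)]<2K^* \text{ for all } \tau\in[0,t]\}
\]
and suppose $t^*<T$, so that $\operatorname{Kosc}[\gamma(t^*)]=2K^*$. On $[0,t^*]$ I will show that $\frac{d}{dt}\operatorname{Kosc}\le 0$, which forces $\operatorname{Kosc}[\gamma(t^*)]\le \operatorname{Kosc}[\gamma_0]<K^*$, a contradiction.

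\textbf{Step 1: bound each term in Lemma~4.2 by $\|\partial_s^2k\|_2^2$.} Note that $\bar k=\theta/L_0$ is constant by Lemma~3.1 and Lemma~3.2. Put $f=k-\bar k$, so $\int_\gamma f\,ds=0$ and $\operatorname{Kosc}=L_0\|f\|_2^2$. Since $\partial_s k$ vanishes at both endpoints (Lemma~3.3), Lemma~2.2 gives $\|\partial_s k\|_2^2\le \frac{L_0^2}{\pi^2}\|\partial_s^2k\|_2^2$. Lemma~2.1 gives
\[
\|f\|_\infty^2\le \frac{2L_0}{\pi}\|\partial_sk\|_2^2 .
\]
Integrating by parts, $\|\partial_sk\|_2^2=-\int f\,\partial_s^2k\,ds\le \|f\|_2\|\partial_s^2k\|_2$, and therefore $\|\partial_sk\|_2^2\le \frac{L_0}{\pi}\|f\|_2\|\partial_s^2k\|_2$. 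With these I estimate the three positive terms:
\begin{itemize}
\item $3L_0\int f^2(\partial_sk)^2\,ds\le 3L_0\|f\|_\infty^2\|\partial_sk\|_2^2$;
\item $6\bar k L_0\int f(\partial_sk)^2\,ds\le 6\theta\|f\|_\infty\|\partial_sk\|_2^2$;
\item $2\bar k^2L_0\|\partial_sk\|_2^2=\frac{2\theta^2}{L_0}\|\partial_sk\|_2^2$.
\end{itemize}
Expressing everything through $x=\operatorname{Kosc}$ and $L_0^2\|\partial_s^2k\|_2^2$ should produce a bound of the form
\[
\frac{d}{dt}\operatorname{Kosc}\le -L_0\|\partial_s^2k\|_2^2\Bigl(2-\tfrac{3}{\pi}x-\tfrac{6\theta}{\pi}\sqrt{x}-\dots\Bigr).
\]

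\textbf{Step 2: the $h(t)$ term.} Here $h=\|\partial_sk\|_2^2/\theta$, and the term is
\[
-hL_0\int\bigl(f^3+3\bar kf^2\bigr)\,ds .
\]
The $3\bar kf^2$ part is nonpositive and can be dropped. The $f^3$ part is bounded by $\|f\|_\infty\|f\|_2^2$, and hence by $\frac{L_0}{\theta}\|\partial_sk\|_2^2\,\|f\|_\infty\,\frac{x}{L_0}$. Using $\|f\|_\infty\lesssim \sqrt{L_0}\,\|\partial_sk\|_2$ together with $\|\partial_sk\|_2^2\le \frac{L_0}{\pi}\|f\|_2\|\partial_s^2k\|_2$, this should yield the $\frac{L_0}{\theta\pi^{3/2}}x^{3/2}$ term appearing in the defining equation for $K^*$.

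\textbf{Step 3: conclude.} On $[0,t^*]$ we have $x<2K^*$, and by the choice of $K^*$ as the root of that polynomial the bracket is positive. Hence $\operatorname{Kosc}$ is nonincreasing on $[0,t^*]$, which gives the contradiction described above.

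\textbf{Main obstacle.} The hardest part is making the constants match the stated polynomial exactly. This means choosing the interpolation chain so that each term produces precisely the coefficients $\frac{3}{\pi}$, $\frac{6\theta}{\pi}$ and $\frac{L_0}{\theta\pi^{3/2}}$. The dimensionally inhomogeneous factor $L_0$ in the cubic coefficient indicates that the $h$-term is meant to be treated somewhat crudely. If the constants do not match, I would instead prove the bound with whatever root arises and check that it is compatible with $2K^*$.
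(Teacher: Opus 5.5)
There is a genuine gap in the third bullet of your Step~1. The term $2\bar k^2L_0\|\partial_s k\|_2^2=\frac{2\theta^2}{L_0}\|\partial_s k\|_2^2$ carries no power of $x=\operatorname{Kosc}$. Since $\partial_s k$ vanishes at the endpoints, Lemma~2.2 (with optimal constant) gives only
$\frac{2\theta^2}{L_0}\|\partial_s k\|_2^2\le\frac{2\theta^2}{\pi^2}L_0\|\partial_s^2k\|_2^2$. Absorbing it therefore changes the leading constant of your bracket from $2$ to $2(1-\theta^2/\pi^2)$; this is the same competition that produces the gap $1-\theta^2/\pi^2$ in Lemma~5.5. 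The root $x_0$ of $2(1-\theta^2/\pi^2)-\frac{3}{\pi}x-\frac{6\theta}{\pi}\sqrt{x}-\frac{L_0}{\theta\pi^{3/2}}x^{3/2}$ is strictly smaller than $2K^*$. So your Step~3 claim that the bracket is positive for all $x<2K^*$ fails.

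Your fallback does not rescue this. The three $x$-dependent terms evaluated at $x=K^*$ sum to at least $2^{-3/2}\cdot 2=1/\sqrt2$. Hence when $2(1-\theta^2/\pi^2)<1/\sqrt2$ (i.e.\ $\theta$ close to $\pi$) we get $x_0<K^*$. Nothing in the hypotheses excludes $\operatorname{Kosc}[\gamma_0]\in(x_0,K^*)$, and then your bracket is negative already at $t=0$. A telltale sign is that your argument never uses Lemma~4.1 or the hypothesis on $I[\gamma_0]$. It would also yield the stronger, unproved conclusion $\operatorname{Kosc}\le\operatorname{Kosc}[\gamma_0]<K^*$.

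The missing idea, which is how the paper argues, is not to absorb this term at all.
\begin{itemize}
\item Only the three terms with $x$-dependent coefficients go into the bracket. This produces exactly the polynomial whose root is $2K^*$.
\item The remaining term is rewritten via Lemma~3.2 as $2\bar k^2L_0\|\partial_sk\|_2^2=\frac{2\theta^3}{L_0^2}\frac{d}{dt}A[\gamma]$.
\item On $[0,T^*]$ this gives $\operatorname{Kosc}[\gamma(t)]\le\operatorname{Kosc}[\gamma_0]+\frac{2\theta^3}{L_0^2}\bigl(A(t)-A_0\bigr)$.
\item The isoperimetric inequality $A\le L_0^2/(2\theta)$ of Lemma~4.1 caps the total area gain.
\item The assumption $I[\gamma_0]<\theta^3/(2\pi\theta^2-2\pi K^*)$ is exactly what makes $\theta^2\bigl(1-\frac{\theta}{2\pi I[\gamma_0]}\bigr)<K^*$.
\end{itemize}
Hence $\operatorname{Kosc}<K^*+K^*=2K^*$ at $T^*$, a contradiction.

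So $\operatorname{Kosc}$ need not be monotone: it can grow, but by less than $K^*$, which is why the threshold is $2K^*$ rather than $K^*$. Your treatment of the other terms agrees with the paper, including discarding $-3h\bar kL_0\int_\gamma f^2\,ds$ as nonpositive. The constant-matching issue you flag is secondary to this gap.
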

\begin{pf}
It follows from \cite{1} that,
\begin{equation*}
\left| 3L_0 \int_\gamma (k - \bar{k})^2 (\partial_s k)^2 ds \right| \leq \frac{3}{\pi} \text{Kosc}[\gamma] L_0 |\partial_s^2 k|_2^2,
\end{equation*}
\begin{equation*}
\left| 6\bar{k}L_0 \int_\gamma (k - \bar{k})(\partial_s k)^2 ds \right| \leq \frac{6\theta}{\pi} \sqrt{\text{Kosc}[\gamma]} L_0 \|\partial_s^2 k\|_2^2,
\end{equation*}
Using Lemma~2.2 and 3.3, and H\"{o}lder's inequality we have
\begin{align*}
\int _\gamma (\partial_sk)^2\, ds &= -\int_\gamma k\partial_s^2k\, ds =  -\int_\gamma(k - \bar{k})\partial_s^2k  \leq \left( \int_\gamma(k - \bar{k})^2\,ds \right)^{\frac{1}{2}} \left( \int _\gamma (\partial_s^2k)^2\, ds \right)^{\frac{1}{2}}
\end{align*}
\begin{align*}
\int_\gamma |k - \bar{k}|^3 \, ds &\leq \|k - \bar{k}\|_{L^\infty} \int_\gamma (k - \bar{k})^2 \, ds \leq \sqrt{\frac{L_0}{\pi}} \|\partial_sk\|_2 \int_\gamma (k - \bar{k})^2 \, ds \\
&\leq \frac{L_0}{\pi} \sqrt{\frac{L_0}{\pi}} \|\partial_s^2k\|_2 \int_\gamma (k - \bar{k})^2 \, ds.
\end{align*}\\
Thus,
\begin{align*}
-h(t)L_0 \int_\gamma (k - \bar{k})^3 \, ds &\leq \frac{\int_\gamma k_s^2 \, ds}{\theta} L_0 \int_\gamma |k - \bar{k}|^3 \, ds \\
&\leq \frac{L_0}{\theta} \left( \int_\gamma (k - \bar{k})^2 \, ds \right)^{\frac{1}{2}} \left( \int_\gamma (\partial_s^2k)^2 \, ds \right)^{\frac{1}{2}} \cdot \frac{L_0}{\pi} \sqrt{\frac{L_0}{\pi}} \|\partial_s^2k\|_2 \int_\gamma (k - \bar{k})^2 \, ds \\
&= \frac{L_0^{\frac{5}{2}}}{\theta \pi^{\frac{5}{2}}} \|\partial_s^2k\|_2^2 \left( \int_\gamma (k - \bar{k})^2 \, ds \right)^{\frac{3}{2}} = \frac{L_0}{\theta \pi^{\frac{3}{2}}} \|\partial_s^2k\|_2^2 \text{Kosc}^{\frac{3}{2}}
\end{align*}
By Lemmas 4.2 and the above inequality, we obtain
\begin{equation}
\frac{d}{dt} \text{Kosc} + L_0\left(2 - \frac{L_0}{\theta \pi^{\frac{3}{2}}} \text{Kosc}^{\frac{3}{2}} - \frac{3}{\pi}\text{Kosc} - \frac{6\theta}{\pi}\sqrt{\text{Kosc}}\right) \|\partial_s^2k\|_2^2 \leq  2\bar{k}^2 L_0 \|\partial_s k\|_2^2 - 3h(t)L_0\bar{k}\int_\gamma(k- \bar{k})^2\, ds
\end{equation}
We will prove Lemma~4.3 by contradiction. Suppose there exists \( T^* \in [0, T) \) such that
\[
\text{Kosc}[\gamma(t)] < 2K^* \quad \text{for all } t \in [0, T^*), \quad \text{Kosc}[\gamma(T^*)] = 2K^*.
\]

\noindent Since \( \text{Kosc}[\gamma] < 2K^* \) implies that
\[
2 - \frac{L_0}{\theta \pi^{\frac{3}{2}}} \text{Kosc}^{\frac{3}{2}} - \frac{3}{\pi}\text{Kosc} - \frac{6\theta}{\pi}\sqrt{\text{Kosc}} > 0,
\]
we observe from (6), (19), Lemmas~3.1, and 3.2 that
\[
\begin{split}
\frac{d}{dt} \text{Kosc}[\gamma] &\leq 2\bar{k}^2 L[\gamma] \|\partial_s k\|_2^2 = \frac{2\theta^3}{L_0^2} \frac{dA}{dt}
\end{split}
\]
for all \( t \in [0, T^*] \). This clearly implies that
\[
\text{Kosc}[\gamma(t)] \leq \text{Kosc}[\gamma_0] + \frac{2\theta^3}{L_0^2}(A(t) - A_0) \quad \text{for all } t \in [0, T^*].
\]
 Then it follows from Lemmas~3.2 and 4.1 that
\begin{equation*}
\begin{split}
\text{Kosc}[\gamma(t)] &\leq \text{Kosc}[\gamma_0] + \frac{2\theta^3}{L_0^2}\left(\frac{L_0^2}{2\theta} - A_0\right) \\
&= \text{Kosc}[\gamma_0] + \theta^2\left(1 - \frac{\theta}{2\pi}\cdot \frac{1}{I[\gamma_0]}\right) < K^* + K^* = 2K^*
\end{split}
\end{equation*}
for all \( t \in [0, T^*] \), where we used (6) in the last inequality. However, this contradicts \( \text{Kosc}[\gamma(T^*)] = 2K^* \). Therefore, Lemma~4.3 follows.
\hfill
\end{pf}
Now we prove the existence of global-in-time solution of (4).\\
\begin{theorem}
Let $\theta \in (0, \pi]$. Assume that $\gamma_0$ satisfies (5) and (6). Then problem (4) possesses a unique global-in-time solution.
\end{theorem}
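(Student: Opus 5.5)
The argument is by contradiction via a continuation criterion. Let $T\in(0,\infty)$ be the maximal existence time of the local solution from Section 3, and suppose $T<\infty$. By the standard theory for this fourth-order problem (as in \cite{2}), the solution can be continued past $T$ once we have, uniformly on $[0,T)$, bounds on $\|\partial_s^l k\|_2$ for every $l\ge 0$ and a uniform positive lower bound on $|\partial_u\gamma|$. Since the length is fixed at $L_0$ (Lemma 3.2), lengths cannot degenerate, and the tangential reparametrisation can be controlled in the usual way. So the whole proof reduces to uniform-in-time curvature estimates. Uniqueness follows from the local theory.

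\textbf{Step 1: base-level bounds.} Lemma 4.3 gives $\operatorname{Kosc}[\gamma]<2K^*$, and $\bar k=\theta/L_0$ is constant by Lemmas 3.1 and 3.2. Hence $\|k\|_2^2=\operatorname{Kosc}/L_0+\theta^2/L_0$ is uniformly bounded. Since $\frac{d}{dt}A=\frac{L_0}{\theta}\|\partial_s k\|_2^2$ and $A\le L_0^2/(2\theta)$ by Lemma 4.1, we get
\[
\int_0^T\|\partial_s k\|_2^2\,dt\le \frac{\theta}{L_0}\Big(\frac{L_0^2}{2\theta}-A_0\Big)<\infty .
\]
So $h\in L^1(0,T)$. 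In addition, $\|\partial_s k\|_2^2\le \|k-\bar k\|_2\|\partial_s^2k\|_2$ together with the Poincar\'e inequalities of Lemmas 2.1 and 2.2 gives $L^\infty$ control of $k-\bar k$ in terms of $\|\partial_s k\|_2$.

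\textbf{Step 2: higher-order energy estimates.} For each $l\ge1$ I will compute $\frac{d}{dt}\frac12\|\partial_s^l k\|_2^2$ using Lemma 2.6 and the measure evolution from Lemma 2.4. The boundary terms from integrating by parts vanish because all odd derivatives vanish at the endpoints (Lemma 3.3). For even $l$ I pair $\partial_s^{l+4}k$ with $\partial_s^l k$ and integrate twice; for odd $l$ the relevant boundary factors are again odd derivatives. This yields
\[
\frac{d}{dt}\frac12\|\partial_s^l k\|_2^2+\|\partial_s^{l+2}k\|_2^2=\int P_3^{2l+2}(k)\,ds+h\int P_3^{2l}(k)\,ds ,
\]
where $P_a^b$ denotes sums of products of $a$ derivative factors of total order $b$, plus lower-order terms from $\partial_t ds$. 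Applying Proposition 2.3 with $m=l+2$, and using that $\|k\|_2$ and $L_0$ are controlled, each cubic term is bounded by $\varepsilon\|\partial_s^{l+2}k\|_2^2+C$. The $h$-terms are bounded by $C\,h(t)\,(\|\partial_s^l k\|_2^2+1)$. Gronwall's inequality with $h\in L^1$ then gives $\sup_{[0,T)}\|\partial_s^l k\|_2<\infty$ for every $l$.

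The main obstacle is the interpolation bookkeeping. The paper's Proposition 2.3 is scale-normalised, so the exponents must stay strictly below $2$ for Young's inequality to absorb into $\|\partial_s^{l+2}k\|_2^2$. For the $h$-terms I will instead use $h\le \theta^{-1}\|k-\bar k\|_2\|\partial_s^2k\|_2$ together with the fact that $h$ is integrable in time. Since interpolation is applied to $k-\bar k$ rather than $k$, the constant $\bar k$ only contributes lower-order terms.

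\textbf{Step 3: conclusion.} With all derivatives of $k$ bounded, the velocity $-(h+\partial_s^2k)\nu$ and all its derivatives are bounded. Therefore $\gamma(\cdot,t)$ converges smoothly as $t\to T$, and $|\partial_u\gamma|$ stays bounded above and below, using the evolution $\partial_t|\partial_u\gamma|=k(\partial_s^2k+h)|\partial_u\gamma|$ from Lemma 2.4. The limit curve still satisfies the boundary and compatibility conditions \eqref{eq:4}. Restarting the local existence theorem from this limit extends the solution beyond $T$, which contradicts maximality. Hence $T=\infty$.
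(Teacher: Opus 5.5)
Your proof is correct, and its skeleton matches the paper's. Both arguments assume $T_{\max}<\infty$, use Lemma 4.3 together with $\operatorname{Kosc}[\gamma]=L_0\|k\|_2^2-\theta^2$ to bound $\|k\|_2$ on $[0,T_{\max})$, and then contradict a blow-up criterion. The difference is where the criterion comes from.

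The paper imports it in one line: [4, Theorem 4.1] gives $\|k\|_2^2\ge C(T-t)^{-1/4}$. That result is established for closed curves under the unconstrained flow. Its validity here, with endpoints on $\eta_0,\eta_\theta$ and the nonlocal multiplier $h$, is taken on trust.

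You instead prove the criterion. You run energy estimates for $\|\partial_s^l k\|_2^2$, with boundary terms killed by Lemma 3.3 for both parities of $l$. You then interpolate, integrate over the finite interval, and restart the local theory. This costs bookkeeping, but it is self-contained. It also checks exactly the two features, the boundary and $h$, that separate this problem from the cited setting.

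Two corrections to your Step 2:
\begin{itemize}
\item The nonlinearity paired with $\partial_s^l k$ is $\int_\gamma P_4^{2l+2}(k)\,ds$, with four factors, not $P_3^{2l+2}$. Its interpolation exponent is $(2l+3)/(l+2)<2$, so your absorption still works.
\item The $h$-terms are not bounded by $C\,h(t)\,(\|\partial_s^l k\|_2^2+1)$. Already for $l=1$ they equal $-\frac52\,h\int_\gamma k(\partial_s k)^2\,ds$. This contains $h\int_\gamma (k-\bar k)(\partial_s k)^2\,ds$, which is of size $h\|\partial_s k\|_2^3$. So Gronwall's inequality with $h\in L^1$ alone does not close.
\end{itemize}

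Your alternative route does close. Write $h=\theta^{-1}\|\partial_s k\|_2^2$ and interpolate every factor against $\|\partial_s^{l+2}k\|_2$. The $h$-terms then carry total exponent $(2l+\frac52)/(l+2)<2$ and are absorbed into $\|\partial_s^{l+2}k\|_2^2$. With this, the time-integrability of $h$ from Step 1 is not needed at all.
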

\begin{pf}
Suppose that $T_{\max} < \infty$. It follows from Lemma 4.3 that $\text{Kosc}[\gamma] < 2K^*$ for all $t \in [0, T_{\max})$.Since (6) and Lemma 3.1 imply that $\int_\gamma k \, ds = \theta$ for all $t \in [0, T_{\max})$, we see that
\begin{equation*}
\begin{split}
\text{Kosc}[\gamma]&= L_0\int_\gamma (k - \bar{k})^2 ds = L_0\|k\|_2^2 - 2L_0\bar{k}\int_\gamma k ds + L_0\int \bar{k}^2 ds\\
&= L_0\|k\|_2^2 - \theta^2
\end{split}
\end{equation*}
According to the [4, Theorem 4.1], $\int_\gamma k^2 ds \geq C(T-t)^{-1/4}$, as $t \to T$, $\|k\|_2^2 \to \infty$, then $\|k\|_2 \to \infty$, this contrsdicts Lemma 4.2.
\end{pf}

\section{Full Limit Convergence}
In Sect. 4, we proved the existence of the global-in-time solution to (CD) with $\gamma_0$ satisfying (5) and (6). In this section, we prove the full limit convergence of the solutions to an equilibrium. First, we prepare an interpolation inequality for $k - \bar{k}$.

\begin{lemma}\cite{1}
Let $\gamma$ be a global-in-time solution of (4) with initial data $\gamma_0$ satisfying (5) and (6). Then for all $m \in \mathbb{N}$, $p \geq 2$, and $0 \leq j < m$, we have
\end{lemma}
\begin{equation*}
\|\partial_s^j (k - \bar{k})\|_p \leq M \|k - \bar{k}\|_{2}^{1-\alpha} \|\partial_s^m k\|{2}^\alpha
\end{equation*}
with $M = M(j, m, p) > 0$ and
\[
  \alpha = \frac{1}{m} \left( j + \frac{1}{2} - \frac{1}{p} \right).
\]

\begin{lemma}
Let $\gamma$ be a global-in-time solution of (4). For each $q, r, m, l \in \mathbb{N} \cup \{0\}$
with $q + r + m = l$ and constants $c_{qrm}$, there exist positive constant $C$ and $\alpha_{qrm}$ such that
\[
\sum_{q+r+m=l}c_{qrm} \int_\gamma \partial_s^l k \, \partial_s^{q+2} k \, \partial_s^r k \, \partial_s^m k \, ds
\leq \frac{1}{2}\|\partial_s^{l+2} k\|_2^2 + C_1 \sum_{(q+r+m=l} \operatorname{Kosc}[\gamma]^{\alpha_{qrm}}.
\]
\[
h(t)\sum_{q+r=l}c_{qr} \int_\gamma \partial_s^l k \, \partial_s^q k \, \partial_s^r k \, ds
\leq \frac{1}{2}\|\partial_s^{l+2} k\|_2^2 + C_2 \sum_{q+r=l} \operatorname{Kosc}[\gamma]^{\alpha_{qr}}.
\]
\end{lemma}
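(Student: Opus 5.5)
The plan is the standard Gagliardo--Nirenberg scheme: bound each multilinear integrand by H\"older, control every factor by the interpolation inequality of Lemma 5.1 with top order $m=l+2$, and split off the highest-order norm with Young's inequality. The lower-order factor is fed by the uniform bound on $\operatorname{Kosc}$ from Lemma 4.3. Throughout, the length stays fixed at $L_0$ (Lemma 3.2) and $\bar k=\theta/L_0$ is constant (Lemma 3.1). Hence any dependence on $L$ or $\bar k$ only enters the constants $C_1,C_2$.

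For derivatives of order at least one we have $\partial_s^i k=\partial_s^i(k-\bar k)$. If a factor is $k$ itself (order zero), write $k=(k-\bar k)+\bar k$. This splits each product into terms that are purely multilinear in $k-\bar k$ and its derivatives, plus terms of lower degree multiplied by powers of $\bar k$.

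For the cubic-type term, take a typical summand
\[
\int_\gamma |\partial_s^l k|\,|\partial_s^{q+2}k|\,|\partial_s^r k|\,|\partial_s^m k|\,ds ,
\]
with total derivative count $2l+2$ and degree $4$. Apply H\"older with exponents $p_i=4$, or with exponents adapted to the orders when needed. Lemma 5.1 with $m=l+2$ and $j\in\{l,q+2,r,m\}$ bounds each factor by $M\|k-\bar k\|_2^{1-\alpha_j}\|\partial_s^{l+2}k\|_2^{\alpha_j}$. Note that $q+2\le l+2$, and when $q=l$ the index $q+2$ equals $l+2$. In that case one first integrates by parts once, using the boundary conditions of Lemma 3.3 (odd derivatives vanish at the endpoints), to move a derivative off the top-order factor so that all orders are $<l+2$.

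Summing the exponents gives
\[
\sum_j\alpha_j=\frac{1}{l+2}\Big(2l+2+4\big(\tfrac12-\tfrac14\big)\Big)=\frac{2l+3}{l+2}<2 .
\]
Young's inequality with exponents $\frac{2}{\sum\alpha_j}$ and its conjugate then yields
\[
\tfrac12\|\partial_s^{l+2}k\|_2^2+C\,\|k-\bar k\|_2^{\beta},
\]
and $\|k-\bar k\|_2^2=\operatorname{Kosc}/L_0$ turns the remainder into $C\operatorname{Kosc}^{\alpha_{qrm}}$. The lower-degree terms carrying powers of $\bar k$ have even smaller total scaling, so the same argument applies.

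For the second inequality, first bound $h(t)=\|\partial_s k\|_2^2/\theta$. Interpolating again gives
\[
\|\partial_s k\|_2^2\le M\|k-\bar k\|_2^{2(1-\frac1{l+2})}\|\partial_s^{l+2}k\|_2^{\frac{2}{l+2}} .
\]
This makes the whole term a product of degree $5$ with derivative count $2l+2$, so the exponent sum becomes
\[
\frac{1}{l+2}\Big(2l+2+3\big(\tfrac12-\tfrac13\big)\Big)\quad\text{(for $L^3$ H\"older)},
\]
which is still $<2$. Young's inequality then gives the stated bound.

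The main obstacle is verifying that the exponent of $\|\partial_s^{l+2}k\|_2$ is strictly below $2$ in every case. This covers the borderline cases $q=l$ and the $h(t)$ term with low $l$ (e.g.\ $l=0,1$). It also requires checking that the integration by parts there produces no boundary terms, which Lemma 3.3 ensures since every product it generates contains an odd-order derivative of $k$ at $\pm1$. If some case fails to stay below $2$, the fallback is to use the Kosc bound from Lemma 4.3 to absorb a factor $\|k-\bar k\|_2\le (2K^*/L_0)^{1/2}$. The constant may also be allowed to depend on $K^*$, $\theta$ and $L_0$.
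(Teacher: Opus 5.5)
Your proposal is correct and follows essentially the paper's route. You bound $h(t)$ through Lemma 5.1 with power $\frac{2}{l+2}$ of $\|\partial_s^{l+2}k\|_2$, split $k=(k-\bar k)+\bar k$, and control each factor by H\"older and Lemma 5.1 with top order $l+2$, so the total power is $\frac{2l+5/2}{l+2}<2$; Young's inequality then absorbs it, and the paper gets the same exponent using $L^2\cdot L^4\cdot L^4$ instead of $L^3$. The only difference is that the paper defers the first inequality to \cite{1}, while your integration by parts in the borderline case $q=l$, with boundary terms killed by Lemma 3.3, supplies the detail that keeps every order strictly below $l+2$.
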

\begin{pf}
 The first inequality can be justified by Lemma 5.1 in [1], and it can be proven by taking $\varepsilon$ sufficiently small. We now prove the second inequality.We divide the proof into five steps.\\
\textbf{Step 1.}By Lemma 5.1, we have,
\begin{equation}
\begin{split}
h(t) &= \frac{\int_\gamma (\partial_s k)^2 ds}{\theta} = \frac{1}{\theta} \|\partial_s (k-\bar{k})\|_2^2 \\
&\leq C \|k-\bar{k}\|_2^{2(1-\alpha_0)} \|\partial_s^{l+2} k\|_2^{2\alpha_0}
\end{split}
\end{equation}
 where $\alpha_0=\frac{1}{l+2}$.\\
\textbf{Step 2.} We consider the case $q \neq 0, r \neq 0$. It follows from Holders's inequality, (20) and Lemma 5.1 that
 \begin{equation*}
\begin{split}
h(t)C_{qr} \int_\gamma \partial_s^l k \, \partial_s^q k \, \partial_s^r k \, ds &\leq h(t) C_{qr} \|\partial_s^l k\|_2 \|\partial_s^q k\|_4 \|\partial_s^r k\|_4, \\
&\leq C \|k - \bar{k}\|_2^{5-\alpha_1} \|\partial_s^{l+2} k\|_2^{\alpha_1}
\end{split}
\end{equation*}
where $\alpha_1 = \frac{2l + \frac{5}{2}}{l+2}$.By Young's inequality and lemma 3.2 and 4.1, we have
\begin{align*}
C \|k - \bar{k}\|_2^{5-\alpha_1} \|\partial_s^{l+2} k\|_2^{\alpha_1}
&\leq \varepsilon \|\partial_s^{l+2} k\|_2^2 + C \|k - \bar{k}\|_2^{\frac{2(2l+15)}{3}} \\
&\leq \varepsilon \|\partial_s^{l+2} k\|_2^2 + C \operatorname{Kosc}[\gamma]^{2l+5}.
\end{align*}
\textbf{Step 3.} We consider the case $q=0, r \neq 0$ or $q \neq 0, r=0$. It is sufficient to consider the case of $q=0, r \neq 0$.
\begin{equation*}
\begin{split}
h(t)C_{qr} \int_\gamma \partial_s^l k \, \partial_s^r k \, k \, ds &= h(t)C_{qr} \int_\gamma (\partial_s^l k)^2 \, k \, ds \\
&= h(t)C_{qr} \int_\gamma (\partial_s^l k)^2 \, (k - \bar{k}) \, ds + h(t)C_{qr} \bar{k} \int_\gamma (\partial_s^l k)^2 \, ds \\
&=: I_2 + I_3.
\end{split}
\end{equation*}
In the same way as in Step 2, it follows from Holders's inequality, (20) and Lemma 5.1 that
\begin{align*}
|I_2| &\leq h(t)C_{qr} \|\partial_s^{l} k\|_2 \|\partial_s^l k\|_4 \|k - \bar{k}\|_4 \leq C \|k - \bar{k}\|_2^{5-\alpha_2} \|\partial_s^{l+2} k\|_2^{\alpha_2}, \\
|I_3| &= h(t)C_{qr} \bar{k}\|\partial_s^{l} (k-\bar{k})\|_2^2 \|\leq C \|k - \bar{k}\|_2^{4-\alpha_3} \|\partial_s^{l+2} k\|_2^{\alpha_3},
\end{align*}
where $\alpha_2 = (2l + \frac{5}{2})/(l + 2)$ and $\alpha_3 = (2l + 2)/(l + 2)$. This together with Young's inequality implies that
\begin{align*}
|I_1| &\leq \varepsilon \|\partial_s^{l+2} k\|_2^2 + C \operatorname{Kosc}[\gamma]^{2l+5}, \\
|I_2| &\leq \varepsilon \|\partial_s^{l+2} k\|_2^2 + C \operatorname{Kosc}[\gamma]^{l+3}.
\end{align*}
Thus we obtain
\[
h(t)C_{qr} \int_\gamma (\partial_s^l k)^2 \, k \, ds \leq \varepsilon \|\partial_s^{l+2} k\|_2^2 + C \left[ \operatorname{Kosc}[\gamma]^{2l+5} + \operatorname{Kosc}[\gamma]^{l+3} \right].
\]
\textbf{Step 4.} We consider the case $q=r=0$.
\begin{align*}
h(t)C_{qr} \int_\gamma \partial_s^l k \, \partial_s^q k \, \partial_s^r k \, ds
&= h(t)C_{qr} \int_\gamma \partial_s^l k k^2 \, ds \\
&= h(t)C_{qr} \int_\gamma \partial_s^l k [(k-\bar{k})^2+\bar{k}]^2 \, ds \\
&= h(t)C_{qr} \int_\gamma \partial_s^l k \cdot (k - \bar{k})^2 \, ds + 2h(t)C_{qr} \bar{k} \int_\gamma \partial_s^l k \cdot (k - \bar{k}) \, ds + h(t)C_{qr} \bar{k}^2 \int_\gamma \partial_s^l (k-\bar{k}) \, ds \\
&=: I_4 + I_5 + I_6.
\end{align*}
 It follows from Holders's inequality, (20) and Lemma 5.1 that
\begin{align*}
|I_4| &\leq h(t)C_{qr}(\int_\gamma(\partial_s^l k)^2 ds)^{\frac{1}{2}}(\int_\gamma(k-\bar{k})^4 ds)^{\frac{1}{2}}\leq C \|k - \bar{k}\|_2^{5-\alpha_4} \|\partial_s^{l+2} k\|_2^{\alpha_4},\\
|I_5| &\leq 2h(t)\bar{k}C_{qr}(\int_\gamma(\partial_s^l k)^2 ds)^{\frac{1}{2}}(\int_\gamma(k-\bar{k})^2 ds)^{\frac{1}{2}}\leq C \|k - \bar{k}\|_2^3 \|\partial_s^{l+2} k\|_2,\\
|I_6|&\leq h(t)\bar{k}^2C_{qr}(\int_\gamma 1^2 ds)^{\frac{1}{2}}(\int_\gamma(\partial_s^l(k-\bar{k}))^2 ds)^{\frac{1}{2}}\leq C \|k - \bar{k}\|_2^2 \|\partial_s^{l+2} k\|_2.
\end{align*}
where $\alpha_2 = (l + \frac{5}{2})/(l + 2)$. This together with Young's inequality implies that
\begin{align*}
|I_4| &\leq \varepsilon \|\partial_s^{l+2} k\|_2^2 + C \operatorname{Kosc}[\gamma]^{\frac{4l+\frac{15}{2}}{l+\frac{3}{2}}}, \\
|I_5| &\leq \varepsilon \|\partial_s^{l+2} k\|_2^2 + C \operatorname{Kosc}[\gamma]^{3},\\
|I_6| &\leq \varepsilon \|\partial_s^{l+2} k\|_2^2 + C \operatorname{Kosc}[\gamma]^{2}
\end{align*}
Thus we obtain
\[
h(t)C_{qr} \int_\gamma \partial_s^l k \, k^2 \, ds \leq \varepsilon \|\partial_s^{l+2} k\|_2^2 + C \left[ \operatorname{Kosc}[\gamma]^{\frac{4l+\frac{15}{2}}{l+\frac{3}{2}}} + \operatorname{Kosc}[\gamma]^{3} + \operatorname{Kosc}[\gamma]^{2} \right].
\]
\textbf{Step 5.} We prove the required inequality. Thanks to Step 1, Step 2, Step 3 and Step 4, taking \(\varepsilon > 0\) small enough, we obtain
\[
h(t)\sum_{(q,r,m)\in\mathcal{N}_l} c_{qrm} \int_\gamma \partial_s^l k \, \partial_s^{q} k \, \partial_s^r k ds \leq \frac{1}{2}\|\partial_s^{l+2} k\|_2^2 + C_2 \sum_{q+r=l} \operatorname{Kosc}[\gamma]^{\alpha_{qr}}.
\]

Therefore, we complete the proof.
\end{pf}
\begin{lemma}
Let $\gamma$ be a global-in-time solution of (4) with initial data $\gamma_0$ satisfying (5) and (6). Then, for each $l \in \mathbb{N}$, there exists a constant $C_l > 0$ such that
\[
\|\partial_s^l k\|_2^2 \leq C_l \quad \text{for all } t \in [0, \infty).
\]
\end{lemma}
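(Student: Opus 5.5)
We will derive an energy inequality for $\|\partial_s^l k\|_2^2$ and close it with the interpolation bounds of Lemma 5.2 and the uniform bound $\operatorname{Kosc}[\gamma]<2K^*$ from Lemma 4.3. Since $\bar k=\theta/L_0$ is constant and $L[\gamma]=L_0$ is fixed (Lemmas 3.1 and 3.2), we have $\|k-\bar k\|_2^2=\operatorname{Kosc}[\gamma]/L_0<2K^*/L_0$ for all $t$. Hence every right-hand side of Lemma 5.2 is bounded by a constant depending only on $l$, $L_0$, $\theta$ and $K^*$. For $l=0$ the statement is exactly this bound together with $\|k\|_2^2=\|k-\bar k\|_2^2+\theta^2/L_0$.

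For $l\ge1$ we differentiate, using $\partial_t ds=k(\partial_s^2k+h)\,ds$ from Lemma 2.4 and the evolution of $\partial_s^l k$ from Lemma 2.6:
\begin{equation*}
\frac{d}{dt}\int_\gamma(\partial_s^l k)^2\,ds = -2\int_\gamma \partial_s^l k\,\partial_s^{l+4}k\,ds + \text{(cubic terms)} + h(t)\,\text{(quadratic terms)} + \int_\gamma(\partial_s^l k)^2 k(\partial_s^2k+h)\,ds .
\end{equation*}
The last integral has the same structure as the lower-order terms, so it is absorbed into them. The main step is integrating by parts twice in the first term. The boundary terms are $\partial_s^lk\,\partial_s^{l+3}k-\partial_s^{l+1}k\,\partial_s^{l+2}k$ at $u=\pm1$. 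Exactly one of $l$ and $l+3$ is odd, and likewise exactly one of $l+1$ and $l+2$. So Lemma 3.3 makes every boundary term vanish, and we obtain $-2\|\partial_s^{l+2}k\|_2^2$.

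Next we apply Lemma 5.2 to the cubic and the $h(t)$ terms with $\varepsilon$ small. This gives
\begin{equation*}
\frac{d}{dt}\|\partial_s^lk\|_2^2+\|\partial_s^{l+2}k\|_2^2\le C.
\end{equation*}
To produce a damping term we interpolate once more. Since $\partial_s^l k=\partial_s^l(k-\bar k)$, Lemma 5.1 gives
\begin{equation*}
\|\partial_s^lk\|_2\le M\|k-\bar k\|_2^{2/(l+2)}\|\partial_s^{l+2}k\|_2^{l/(l+2)}.
\end{equation*}
Combined with $\|k-\bar k\|_2\le C$, this yields $\|\partial_s^{l+2}k\|_2^2\ge c\|\partial_s^lk\|_2^{2(l+2)/l}$. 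For odd $l$, the boundary condition $\partial_s^lk(\pm1)=0$ and Lemma 2.2 even give the linear bound $\|\partial_s^{l+2}k\|_2^2\ge (\pi^4/L_0^4)\|\partial_s^lk\|_2^2$. In either case we reach $y'+c\,y^{p}\le C$ with $y=\|\partial_s^lk\|_2^2$ and $p\ge1$. A standard ODE comparison then gives $y(t)\le\max\{y(0),(C/c)^{1/p}\}=:C_l$ for all $t\ge0$.

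The main obstacle is the careful accounting in the previous two steps. The term $\int(\partial_s^lk)^2k\,\partial_s^2k$ and all cubic and $h$-terms must be written in terms of $k-\bar k$, so that Lemma 5.1 applies with total scaling strictly below $\|\partial_s^{l+2}k\|_2^2$. Terms carrying a factor $\bar k$ or $\bar k^2$ are the delicate ones: they are lower order, and I would treat them as in Steps 3 and 4 of Lemma 5.2, giving exponents of $\|\partial_s^{l+2}k\|_2$ strictly less than $2$ so that Young's inequality closes. If an exponent of exactly $2$ appears, I would instead use the smallness $\operatorname{Kosc}<2K^*$, as in the proof of Lemma 4.3.
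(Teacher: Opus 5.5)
Your proposal is correct and follows essentially the same route as the paper. Both arguments use the energy identity for $\|\partial_s^l k\|_2^2$, remove the boundary terms with Lemma 3.3, absorb the cubic and $h(t)$-terms via Lemma 5.2 and $\operatorname{Kosc}[\gamma]<2K^*$ to reach $\frac{d}{dt}\|\partial_s^lk\|_2^2+\|\partial_s^{l+2}k\|_2^2\le C$, and then close with a damping term and ODE comparison. The only deviation is that for even $l$ you obtain the damping by interpolation. The paper instead uses the Poincaré inequalities (Lemmas 2.1--2.2) for every $l$, which also work for even $l\ge 2$: there $\partial_s^l k$ has zero mean and $\partial_s^{l+1}k$ vanishes at $u=\pm1$.
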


\begin{pf}
From Lemmas 2.4, 2.6, 3.3, 4.4 and 5.2, we find a constant $C > 0$ such that
\begin{align*}
\frac{d}{dt} \|\partial_s^l k\|_2^2
&= 2 \int_\gamma \partial_t \partial_s^l k \, \partial_s^l k \, ds + \int_\gamma k\, (\partial_s^l k)^2 \, (\partial_s^2 k + h(t))\, ds \\
&= -2 \int_\gamma \partial_s^l k \, \partial_s^{l+4} k \, ds + \sum_{q+r+m=l} C_{qrm} \int_\gamma \partial_s^l k \, \partial_s^{q+2} k \, \partial_s^r k \, \partial_s^m k \, ds \\
&+ h(t)\sum_{q+r=l} C_{qr} \int_\gamma \partial_s^l k \, \partial_s^{q} k \, \partial_s^r k \, ds
+ \int_\gamma k \partial_s^2 k \, (\partial_s^l k)^2 \, ds\\
&+ h(t)\int_\gamma k\, (\partial_s^l k)^2 \, ds\\
&= -2\|\partial_s^{l+2} k\|_2^2 + \sum_{q+r+m=l} C_{qrm}\int_\gamma \partial_s^l k \, \partial_s^{q+2} k \, \partial_s^r k \, \partial_s^m k \, ds + h(t)\sum_{q+r=l} C_{qr} \int_\gamma \partial_s^l k \, \partial_s^{q} k \, \partial_s^r k \, ds \\
&\leq -\|\partial_s^{l+2} k\|_2^2 + C_1  \sum_{q+r+m=l} \operatorname{Kosc}[\gamma]^{\alpha_{qrm}}  + C_2  \sum_{q+r=l} \operatorname{Kosc}[\gamma]^{\alpha_{qr}} \\
&\leq -\|\partial_s^{l+2} k\|_2^2 + C.
\end{align*}
Then, from Lemmas 2.1, 2.2, 3.2, and 4.1, we find positive constants $C_1$ and $C_2$ such that
\[
\frac{d}{dt} \|\partial_s^l k\|_2^2 \leq -C_1 \|\partial_s^l k\|_2^2 + C_2 \quad \text{for all } t > 0.
\]
This clearly implies that
\[
\|\partial_s^l k(t)\|_2^2 \leq \left[ \|\partial_s^l k(0)\|_2^2 - \frac{C_2}{C_1} \right] e^{-C_1 t} + \frac{C_2}{C_1} \quad \text{for all } t > 0.
\]
Therefore Lemma 5.3 follows.
\end{pf}

Here we prove the decay of $\operatorname{Kosc}[\gamma]$ as $t \to \infty$.
\begin{lemma}
Let $\gamma$ be a global-in-time solution of (4) with initial data $\gamma_0$ satisfying (5) and (6). Then
\[
\lim_{t\to\infty} \operatorname{Kosc}[\gamma] = 0.
\]
\end{lemma}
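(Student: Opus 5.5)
We will show that $\|\partial_s k\|_2^2$ is integrable in time and has a bounded time derivative. We then pass from the decay of $\|\partial_s k\|_2$ to the decay of $\operatorname{Kosc}[\gamma]$ through the Poincar\'e inequality.

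\textbf{Step 1 (integrability).} By Lemma 3.2, $\frac{d}{dt}A[\gamma(t)]=\frac{L_0}{\theta}\|\partial_s k\|_2^2\ge 0$. Lemma 4.1 gives $A[\gamma(t)]\le L_0^2/(2\theta)$ for all $t$, since $L$ is conserved. Integrating over $[0,\infty)$ therefore yields
\begin{equation*}
\int_0^\infty \|\partial_s k\|_2^2\,dt \le \frac{\theta}{L_0}\Big(\frac{L_0^2}{2\theta}-A[\gamma_0]\Big)<\infty .
\end{equation*}

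\textbf{Step 2 (bounded derivative).} Next we show that $f(t):=\|\partial_s k\|_2^2$ has bounded derivative, so that $f$ is uniformly continuous. We apply the computation in the proof of Lemma 5.3 with $l=1$:
\begin{equation*}
\frac{d}{dt}\|\partial_s k\|_2^2 = -2\|\partial_s^3 k\|_2^2 + \sum c_{qrm}\int_\gamma \partial_s k\,\partial_s^{q+2}k\,\partial_s^r k\,\partial_s^m k\,ds + h(t)\sum c_{qr}\int_\gamma \partial_s k\,\partial_s^q k\,\partial_s^r k\,ds .
\end{equation*}
The boundary terms vanish by Lemma 3.3. Every term on the right is a polynomial in $\partial_s^j k$ for $j\le 3$, together with $h(t)=\|\partial_s k\|_2^2/\theta$.

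Lemma 5.3 bounds $\|\partial_s^j k\|_2$ uniformly in time. Together with $\|k\|_2^2=(\operatorname{Kosc}+\theta^2)/L_0$, which is bounded by Lemma 4.3, and $L\equiv L_0$, Lemma 2.1 applied to the mean-zero function $k-\bar k$ gives uniform $L^\infty$ bounds on $k$. Lemma 2.2 applies to $\partial_s^{j}k$ for odd $j$, which vanish at the endpoints by Lemma 3.3, and gives uniform $L^\infty$ bounds on those. Hence $|f'(t)|\le C$ for all $t$.

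\textbf{Step 3 (decay of $\|\partial_s k\|_2$).} A nonnegative, integrable, uniformly continuous function tends to $0$. Indeed, suppose $f(t_n)\ge\varepsilon$ along a sequence $t_n\to\infty$. Then $f\ge\varepsilon/2$ on the intervals $[t_n,t_n+\varepsilon/(2C)]$. Infinitely many of these intervals are disjoint, which contradicts $\int_0^\infty f\,dt<\infty$. Therefore $\|\partial_s k\|_2\to 0$.

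\textbf{Step 4 (conclusion).} Finally, $\int_\gamma (k-\bar k)\,ds=0$, so Lemma 2.1 gives
\begin{equation*}
\operatorname{Kosc}[\gamma]=L_0\|k-\bar k\|_2^2\le \frac{L_0^3}{\pi^2}\|\partial_s k\|_2^2\to 0 .
\end{equation*}

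The main obstacle is Step 2. One has to check that every nonlinear term in $\frac{d}{dt}\|\partial_s k\|_2^2$ is controlled by quantities that are uniformly bounded in time. This relies on Lemma 5.3 and on uniform sup-norm bounds for $k$ and its derivatives. If the sup-norm bounds are awkward to obtain directly, a cleaner alternative is to use the interpolation inequality of Lemma 5.1 with $\|\partial_s^m k\|_2\le C_m$ from Lemma 5.3. This bounds each $\|\partial_s^j k\|_\infty$ by a constant depending only on $L_0$, $\theta$ and $K^*$.
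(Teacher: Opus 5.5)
Your proof is correct and uses the same mechanism as the paper. Time-integrability comes from the monotone area, bounded via the isoperimetric inequality (Lemmas 3.2, 4.1); a uniform derivative bound comes from Lemma 5.3; and a Barbalat-type argument finishes. The one difference is that you run the argument on $\|\partial_s k\|_2^2$, with a two-sided derivative bound, and then apply Lemma 2.1, whereas the paper runs it directly on $\operatorname{Kosc}[\gamma]$ using only the one-sided bound $\frac{d}{dt}\operatorname{Kosc}[\gamma]\le C$ from (19). Your version actually justifies the uniform continuity the paper only asserts, and gives the slightly stronger conclusion $\|\partial_s k\|_2\to 0$.
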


\begin{pf}
First, we verify that the derivative of $\operatorname{Kosc}[\gamma]$ is bounded in $[0, \infty)$. Indeed, by plugging Lemmas 3.1, 3.2, 4.2, and 5.3 into (19), we find a constant $C > 0$ such that
\[
\frac{d}{dt} \operatorname{Kosc}[\gamma] \leq 2\bar{k}^2 L_0 \|\partial_s k\|_2^2 < C.
\]
In particular, $\operatorname{Kosc}[\gamma]$ is uniformly continuous in $[0, \infty)$.
By Lemmas 2.1 and 3.2, we have
\[
|\operatorname{Kosc}[\gamma]| = L_0 \|k - \bar{k}\|_2^2 \leq \frac{L_0^3}{\pi^2} \|\partial_s k\|_2^2 = \frac{\theta L_0^2}{\pi^2} \frac{dA}{dt}.
\]
Integrating the both sides with respect to $t$, we obtain
\begin{equation}
\int_0^T |\operatorname{Kosc}[\gamma]| dt \leq \frac{\theta L_0^2}{\pi^2}[A(t)-A_0] \leq \frac{L_0^4}{2\pi^2}
\end{equation}
for all $T > 0$.

Therefore, we see that $\operatorname{Kosc}[\gamma]$ is in $L^1(0, \infty)$ and uniformly continuous on $[0, \infty)$. This clearly implies that $\operatorname{Kosc}[\gamma] \to 0$ as $t \to \infty$. We complete the proof.
\end{pf}

Thanks to Lemmas 5.1 and 5.4, we have:
\begin{lemma}
Let $\theta \in (0, \pi)$. Let $\gamma$ be a global-in-time solution of (4) with initial data $\gamma_0$ satisfying (5) and (6). Then there exist positive constants $C$ and $\delta$ such that
\[
\|\partial_s^2 k\|_2^2 < C e^{-\delta t} \quad \text{for all } t \in [0, \infty).
\]
\end{lemma}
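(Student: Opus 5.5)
The plan is to derive a differential inequality for $\|\partial_s^2 k\|_2^2$ of the form
\[
\frac{d}{dt}\|\partial_s^2 k\|_2^2 \le -\delta\,\|\partial_s^2 k\|_2^2
\]
for all $t\ge t_0$, and then integrate it. The bound on the finite interval $[0,t_0]$ comes from Lemma 5.3 after enlarging $C$.

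\textbf{Step 1: the evolution identity.} Take $l=2$ in the computation from the proof of Lemma 5.3, using Lemmas 2.4, 2.6 and 3.3. Since $\partial_s^3k$ vanishes at the endpoints, integration by parts gives
\[
\frac{d}{dt}\|\partial_s^2 k\|_2^2 = -2\|\partial_s^4 k\|_2^2 + \sum_{q+r+m=2} c_{qrm}\int_\gamma \partial_s^2k\,\partial_s^{q+2}k\,\partial_s^rk\,\partial_s^mk\,ds + h(t)\sum_{q+r=2}c_{qr}\int_\gamma \partial_s^2k\,\partial_s^qk\,\partial_s^rk\,ds.
\]
This includes the extra terms coming from $\partial_t ds$.

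\textbf{Step 2: make the error terms small, not merely bounded.} Here Lemma 5.2 as stated is too weak. Its additive constant $C\,\mathrm{Kosc}^{\alpha}$ does not give decay by itself. I will instead redo the Gagliardo--Nirenberg bookkeeping of Lemma 5.1 while keeping the exponents explicit.

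Write each product in terms of $k-\bar k$. Wherever $\partial_s^j k$ appears with $j\ge1$, use $\partial_s^j k=\partial_s^j(k-\bar k)$. Use $\bar k=\theta/L_0$ (Lemmas 3.1, 3.2) for the constant pieces.

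Each term is then bounded by $C\|k-\bar k\|_2^{a}\|\partial_s^4k\|_2^{b}$ with $b<2$ or $b=2$. The key point is that every term contains at least one factor of $k-\bar k$ or its derivatives beyond the two needed for the $\|\partial_s^4k\|_2^2$ scaling. Consequently, each term with $b=2$ carries a positive power of $\|k-\bar k\|_2$.

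For the quadratic $\bar k^2$-terms, such as $\bar k^2\int(\partial_s^2k)^2\partial_s^0$-type contributions and $\bar k\int \partial_s^2k\,\partial_s^2k\,\partial_s^2 k$, use interpolation between $\|\partial_s^2k\|_2$ and $\|\partial_s^4k\|_2$:
\[
\|\partial_s^3k\|_2^2\le \|\partial_s^2k\|_2\|\partial_s^4k\|_2 .
\]
Then absorb the result with Young's inequality plus a Poincaré step.

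The target intermediate bound is
\[
\frac{d}{dt}\|\partial_s^2k\|_2^2\le -\bigl(2-\varepsilon(t)\bigr)\|\partial_s^4k\|_2^2 + C\,\bar k^{2}\|\partial_s^3k\|_2^2,
\]
with $\varepsilon(t)\to0$ as $\mathrm{Kosc}[\gamma]\to0$ (Lemma 5.4, using also the uniform bounds of Lemma 5.3).

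\textbf{Step 3: spectral gap via Poincaré.} By Lemma 3.3, $\partial_s k$ and $\partial_s^3 k$ vanish at $\pm1$. Lemma 2.2 then gives $\|\partial_s^3k\|_2^2\le \frac{L_0^2}{\pi^2}\|\partial_s^4k\|_2^2$. Also, $\partial_s^2 k$ has zero mean, so Lemma 2.1 gives $\|\partial_s^2k\|_2^2\le \frac{L_0^2}{\pi^2}\|\partial_s^3k\|_2^2$.

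The main obstacle is the $\bar k$-dependent terms, which do not become small as $\mathrm{Kosc}\to0$. At a circular arc the linearised operator is of the form $-\partial_s^4-\bar k^2\partial_s^2$. Its coercivity on Neumann-type modes requires $\bar k^2<\pi^2/L_0^2$, i.e. $\theta<\pi$. This is exactly where the hypothesis $\theta\in(0,\pi)$ enters.

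So I would try to get the sharp constant: the $\bar k^2$ terms should combine to $+2\bar k^2\|\partial_s^3k\|_2^2$ (matching the $2\bar k^2\|\partial_s k\|_2^2$ term in Lemma 4.2). Then
\[
-2\|\partial_s^4k\|_2^2+2\bar k^2\|\partial_s^3k\|_2^2\le -2\Bigl(1-\frac{\theta^2}{\pi^2}\Bigr)\|\partial_s^4k\|_2^2 .
\]
Choosing $t_0$ so that $\varepsilon(t)<1-\theta^2/\pi^2$ for $t\ge t_0$ and applying the two Poincaré inequalities yields the inequality above with $\delta=(1-\theta^2/\pi^2)\pi^4/L_0^4$. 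Grönwall's inequality then finishes the proof.
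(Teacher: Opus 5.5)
Your proposal is correct and follows essentially the same route as the paper's proof. The paper differentiates $\|\partial_s^2 k\|_2^2$ and bounds all cubic and higher-order terms, including the $h(t)$-terms, by positive powers of $\operatorname{Kosc}[\gamma]$ times $\|\partial_s^4 k\|_2^2$. It isolates the net quadratic contribution $+2\bar k^2\|\partial_s^3 k\|_2^2$: this is $-2\bar k^2$ from $-2\|k\,\partial_s^3 k\|_2^2$ plus $+4\bar k^2$ from $-4\int_\gamma k^2\partial_s^2 k\,\partial_s^4 k\,ds$. It then uses the Dirichlet Poincar\'{e} inequality with $\bar k=\theta/L_0$ to get the factor $-2(1-\theta^2/\pi^2)$, and concludes via Lemma 5.4, the Poincar\'{e} chain and Gr\"{o}nwall with the same $\delta=(1-\theta^2/\pi^2)\pi^4/L_0^4$.

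One caution, which you already anticipate in Step 3: do not estimate the $\bar k^2$-terms with absolute values, as Step 2 first suggests. Covering the whole range $\theta<\pi$ depends on exactly that sign cancellation.
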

\begin{pf}
It follows from Lemmas 2.4, 2.5, 3.3 and integrating by parts, we have
\begin{align*}
\frac{d}{dt} \|\partial_s^2 k\|_2^2
&= 2 \int_\gamma \partial_t \partial_s^2 k \, \partial_s^2 k \, ds + \int_\gamma k (\partial_s^2 k + h(t))(\partial_s^2 k)^2 \, ds \\
&= -2 \int_\gamma \partial_s^6 k \, \partial_s^2 k \, ds - 6 \int_\gamma (\partial_s k)^2 (\partial_s^2 k)^2 \, ds - 8 \int_\gamma k (\partial_s^2 k)^3 \, ds - 10 \int_\gamma k \partial_s k \, \partial_s^2 k \, \partial_s^3 k \, ds\\
 &- 2 \int_\gamma k^2 \partial_s^2 k \, \partial_s^4 k \, ds -2h(t)\int_\gamma (\partial_s^2 k)^2 \partial_s^2 k -4h(t)\int_\gamma k(\partial_s^2 k)^2 ds+\int_\gamma k(\partial_s^2 k)^3 ds+h(t)\int_\gamma k(\partial_s^2 k)^2 ds\\
&= -2 \int_\gamma \partial_s^6 k \, \partial_s^2 k \, ds - 6 \int_\gamma (\partial_s k)^2 (\partial_s^2 k)^2 \, ds - 7 \int_\gamma k (\partial_s^2 k)^3 \, ds - 10 \int_\gamma k \partial_s k \, \partial_s^2 k \, \partial_s^3 k \, ds\\
 &- 2 \int_\gamma k^2 \partial_s^2 k \, \partial_s^4 k \, ds -2h(t)\int_\gamma (\partial_s^2 k)^2 \partial_s^2 k -3h(t)\int_\gamma k(\partial_s^2 k)^2 ds\\
&= -2 \|\partial_s^4 k\|_2^2 + \|\partial_s k \, \partial_s^2 k\|_2^2 + 4 \int_\gamma k \partial_s k \, \partial_s^2 k \, \partial_s^3 k \, ds - 2 \int_\gamma k^2 \partial_s^2 k \, \partial_s^4 k \, ds\\
&-2h(t)\int_\gamma (\partial_s^2 k)^2 \partial_s^2 k -3h(t)\int_\gamma k(\partial_s^2 k)^2 ds.
\end{align*}
More over, since
\begin{align*}
4 \int_\gamma k \partial_s k \, \partial_s^2 k \, \partial_s^3 k \, ds&= 2\int_\gamma \partial_s (k^2) \, \partial_s^2 k \, \partial_s^3 k \, ds \\
&= -2 \|k \partial_s^3 k\|_2^2 - 2 \int_\gamma k^2 \partial_s^2 k \, \partial_s^4 k \, ds,
\end{align*}
it follows that
\begin{align*}
\frac{d}{dt} \|\partial_s^2 k\|_2^2
&= -2 \|\partial_s^4 k\|_2^2 + \|\partial_s k \, \partial_s^2 k\|_2^2 - 2 \|k \partial_s^3 k\|_2^2 - 4 \int_\gamma k^2 \partial_s^2 k \, \partial_s^4 k \, ds - 3h(t)\int_\gamma k(\partial_s^2 k)^2 ds \\
&=: -2 \|\partial_s^4 k\|_2^2 + I_1 + I_2 + I_3 + I_4.
\end{align*}
From [1], it is known that
\begin{align*}
I_1 &\leq \frac{2M^4}{\pi} \operatorname{Kosc}[\gamma] \|\partial_s^4 k\|_2^2, \\
I_2 &\leq \frac{4M^2 \bar{k}L_0}{\pi^\frac{3}{2}} \operatorname{Kosc}[\gamma]^\frac{1}{2} \|\partial_s^4 k\|_2^2 - 2\bar{k}^2 \|\partial_s^3 k\|_2^2, \\
I_3 &\leq \frac{4M^3}{\pi} \operatorname{Kosc}[\gamma] \|\partial_s^4 k\|_2^2 + \frac{8M^2 \bar{k}L_0}{\pi^\frac{3}{2}} \operatorname{Kosc}[\gamma]^\frac{1}{2} \|\partial_s^4 k\|_2^2 + 4\bar{k}^2  \|\partial_s^3 k\|_2^2
\end{align*}
It follows from Holders's inequality and Lemma 5.1 that
\begin{align*}
I_4 &= -3h(t) \int_\gamma (k - \bar{k}) (\partial_s^2 k)^2 \, ds - 3h(t)\bar{k} \int_\gamma (\partial_s^2 k)^2 \, ds \\
&\leq 3h(t) \|\partial _s^2 k \|_2(\int _\gamma(k-\bar{k})^2(\partial_s^2 k)^2 ds)^\frac{1}{2}\\
&\leq \frac{3}{\theta} \|\partial _s k \|_2^2\|\partial _s^2 k \|_2\|\partial _s^2 k \|_4\|k-\bar{k} \|_4 \\
&\leq \frac{3}{\theta}M^4\|k-\bar{k} \|_2^{\frac{48}{16}} \|k-\bar{k} \|_2^{\frac{6}{16}}\|\partial _s^4 k \|_2^{\frac{26}{16}}\\
&\leq \frac{3}{\theta}M^4\|k-\bar{k} \|_2^3(\frac{L_0}{\pi})^{\frac{3}{2}} \|\partial_s^4k \|_2^{\frac{6}{16}}\|\partial _s^4 k \|_2^{\frac{26}{16}}\\
&=\frac{3M^4}{\pi^{\frac{3}{2}}\theta}\operatorname{Kosc}[\gamma]^\frac{3}{2}\|\partial _s^4 k \|_2^2.
\end{align*}

Hence we get
\begin{align*}
\frac{d}{dt} \| \partial_s^2 k \|_2^2
&\leq \left[ -2 + \frac{2M^3(M+2)}{\pi} \operatorname{Kosc}[\gamma] + \frac{12M^2 \bar{k} L_0}{\pi^{\frac{3}{2}}} \operatorname{Kosc}[\gamma]^{\frac{1}{2}}+\frac{3M^4}{\pi^{\frac{3}{2}}\theta}\operatorname{Kosc}[\gamma]^\frac{3}{2} \right] \| \partial_s^4 k \|_2^2 + 2\bar{k}^2 \| \partial_s^3 k \|_2^2 \\
&\leq \left[ -2 + \frac{2M^3(M+2)}{\pi} \operatorname{Kosc}[\gamma] + \frac{12M^2 \bar{k} L_0}{\pi^{\frac{3}{2}}} \operatorname{Kosc}[\gamma]^{\frac{1}{2}}+\frac{3M^4}{\pi^{\frac{3}{2}}\theta}\operatorname{Kosc}[\gamma]^\frac{3}{2} \right] \| \partial_s^4 k \|_2^2 + \frac{2\theta^2}{\pi^2} \| \partial_s^4 k \|_2^2 \\
&= \left[ -2\left( 1 - \frac{\theta^2}{\pi^2} \right) + \frac{2M^3(M+2)}{\pi} \operatorname{Kosc}[\gamma] + \frac{12M^2 \bar{k} L_0}{\pi^{\frac{3}{2}}} \operatorname{Kosc}[\gamma]^{\frac{1}{2}}+\frac{3M^4}{\pi^{\frac{3}{2}}\theta}\operatorname{Kosc}[\gamma]^\frac{3}{2} \right] \| \partial_s^4 k \|_2^2.
\end{align*}
Let $\varepsilon := 1 - \theta^2/\pi^2 > 0$. Thanks to Lemmas 2.1, 2.2, 3.2, 4.1, and 5.4, we find $T > 0$ such that
\[
\frac{d}{dt} \| \partial_s^2 k \|_2^2 \leq -\varepsilon \| \partial_s^4 k \|_2^2 \leq -\frac{\varepsilon \pi^4}{L_0^{4}} \| \partial_s^2 k \|_2^2 \quad \text{for all } \ t \geq T.
\]
This together with Lemmas 3.2, 4.1 and integrating the above inequality yield the conclusion.
\end{pf}
\begin{lemma}
Let $\theta \in (0, \pi)$. Let $\gamma$ be a global-in-time solution of (4) with initial data $\gamma_0$ satisfying (5) and (6). Then, for each $l \in \mathbb{N}$, there exist positive constants $C$ and $\delta$ such that
\[
\|\partial_s^l k\|_{L^\infty} < C e^{-\delta t} \quad \text{for all } t \in [0, \infty).
\]
\end{lemma}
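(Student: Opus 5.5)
The plan is to combine the exponential decay of $\|\partial_s^2 k\|_2^2$ from Lemma 5.5 with the uniform bounds of Lemma 5.3, using the interpolation inequality of Lemma 5.1 (or Proposition 2.3) and the Poincar\'e-type inequalities of Lemmas 2.1 and 2.2. Throughout, $L[\gamma]=L_0$ is constant by Lemma 3.2, so every $L$-dependent constant is uniform in time.

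\textbf{Step 1 (low orders).} For $l=1$, note that $\partial_s k(\pm1,t)=0$. Hence Lemma 2.2 applies to $\partial_s k$ and gives
\[
\|\partial_s k\|_{L^\infty}^2\le \frac{L_0}{\pi}\|\partial_s^2 k\|_2^2\le Ce^{-\delta t}.
\]
For $l=2$ we also have $\int_\gamma \partial_s^2 k\,ds=\partial_s k(1,t)-\partial_s k(-1,t)=0$. So Lemma 2.1 applies to $\partial_s^2k$ and gives
\[
\|\partial_s^2 k\|_{L^\infty}^2\le \frac{2L_0}{\pi}\|\partial_s^3 k\|_2^2.
\]
Since $\partial_s^3k$ vanishes at the boundary by Lemma 3.3, one integration by parts yields
\[
\|\partial_s^3k\|_2^2=-\int_\gamma \partial_s^2k\,\partial_s^4k\,ds\le \|\partial_s^2k\|_2\|\partial_s^4k\|_2 .
\]
Lemma 5.3 bounds $\|\partial_s^4k\|_2$ uniformly, so the right-hand side is at most $C e^{-\delta t/2}$.

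\textbf{Step 2 (general $l$).} For $l\ge 1$, the functions $\partial_s^l k$ have mean zero when $l$ is even (because $\partial_s^{l-1}k$ is odd-order and vanishes at $\pm1$ by Lemma 3.3) and vanish at the endpoints when $l$ is odd. Either way, Lemma 2.1 or 2.2 gives
\[
\|\partial_s^l k\|_{L^\infty}^2\le C L_0\|\partial_s^{l+1}k\|_2^2 .
\]
Repeated integration by parts, with the boundary terms killed by Lemma 3.3, gives the interpolation bound
\[
\|\partial_s^{j}k\|_2^2\le \|\partial_s^{j-1}k\|_2\,\|\partial_s^{j+1}k\|_2 .
\]
Alternatively one can apply Lemma 5.1 to $\partial_s^2 k$ directly in the form
\[
\|\partial_s^{l+1}k\|_2\le C\|\partial_s^2k\|_2^{1-\alpha}\|\partial_s^{m}k\|_2^{\alpha}
\]
with $m$ large and $\alpha<1$. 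The factor $\|\partial_s^m k\|_2$ is bounded by Lemma 5.3, and $\|\partial_s^2k\|_2^{1-\alpha}\le Ce^{-(1-\alpha)\delta t/2}$ by Lemma 5.5. This gives the claim with a smaller rate $\delta'=(1-\alpha)\delta/2$. On the finite interval $[0,T]$, where Lemma 5.5 may only hold after some time $T$, the bound is trivial by Lemma 5.3 after enlarging $C$.

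\textbf{Main obstacle.} The delicate point is justifying the interpolation between $\partial_s^2k$ and $\partial_s^m k$ for the derivative itself, rather than for $k-\bar k$ as stated in Lemma 5.1. To handle this I would avoid relying on Lemma 5.1 and instead use the elementary chain $\|\partial_s^jk\|_2^2\le\|\partial_s^{j-1}k\|_2\|\partial_s^{j+1}k\|_2$. Its boundary terms $\partial_s^{j-1}k\,\partial_s^jk$ vanish at $\pm1$ because one of $j-1,j$ is odd (Lemma 3.3). Iterating this log-convexity from $j=2$ up to $m$ gives $\|\partial_s^{j}k\|_2\le \|\partial_s^2k\|_2^{(m-j)/(m-2)}\|\partial_s^mk\|_2^{(j-2)/(m-2)}$. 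This is exactly the exponential decay needed, with rate $\delta(m-j)/(2(m-2))$.
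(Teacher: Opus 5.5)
Your argument is correct. It shares the paper's overall skeleton: exponential decay of a low-order quantity from Lemma 5.5, uniform higher-order bounds from Lemma 5.3, an interpolation step, and finally Lemmas 2.1 and 2.2 together with Lemma 3.3 to pass from $\|\partial_s^{l+1}k\|_2$ to $\|\partial_s^l k\|_{L^\infty}$. The interpolation step, however, is done differently.

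The paper first turns Lemma 5.5 into exponential decay of $\|\partial_s k\|_2$, and then of $\operatorname{Kosc}[\gamma]=L_0\|k-\bar k\|_2^2$, via two Poincar\'e inequalities. It then invokes the Gagliardo--Nirenberg-type inequality of Lemma 5.1 (taken from Hiroi--Okabe) with base $k-\bar k$, namely $\|\partial_s^l k\|_2\le M\|k-\bar k\|_2^{1-\alpha}\|\partial_s^{l+1}k\|_2^{\alpha}$.

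You instead interpolate directly from $\partial_s^2k$, using the elementary log-convexity $\|\partial_s^j k\|_2^2\le\|\partial_s^{j-1}k\|_2\|\partial_s^{j+1}k\|_2$. Its boundary term $\partial_s^{j-1}k\,\partial_s^jk$ vanishes at $\pm1$ by Lemma 3.3, since one of $j-1,j$ is odd.

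Your route is self-contained: no black-box interpolation inequality, constant $1$, and no detour through $\operatorname{Kosc}$. It also shows exactly where the boundary structure enters. The paper's route reuses the interpolation machinery already employed for the $\operatorname{Kosc}$-based estimates in Section 5 and keeps the decay phrased in terms of $\operatorname{Kosc}$.

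Two small remarks. First, the ``obstacle'' you flag is not a real one. Proposition 2.3 applied to $\phi=\partial_s^2k$ with the full $W^{m,2}_\gamma$-norm would also work, since that norm is bounded by Lemma 5.3 and $L\equiv L_0$. Second, Lemma 5.5 is already stated for all $t\ge0$, so your extra step on $[0,T]$ is unnecessary, though harmless.
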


\begin{pf}
It follows from Lemmas 2.2, 3.2, and 5.5 that
\[
\|\partial_s k\|_2 \leq \frac{L_0}{\pi} \|\partial_s^2 k\|_2 \leq \frac{C L_0}{\pi} e^{-\delta t}
\]
for all $t \in [0, \infty)$. This together with Lemmas 2.1 and 3.2 implies that
\begin{equation}
\operatorname{Kosc}[\gamma] = L_0 \|k - \bar{k}\|_2^2 \leq \frac{L_0^3}{\pi^2} \|\partial_s k\|_2^2 \leq \frac{C L_0^4}{\pi^3} e^{-\delta t}
\end{equation}
for all $t \in [0, \infty)$. For each $l \geq 3$, combining (22) with Lemmas 5.1, 5.3 and 5.5, we find $C > 0$ such that
\begin{equation}
\begin{split}
\|\partial_s^l k\|_2 &\leq M \|k - \bar{k}\|_2^{1-\alpha} \|\partial_s^{l+1} k\|_2^\alpha \\
&= \frac{M}{L_0^{\frac{1-\alpha}{2}}} \operatorname{Kosc}[\gamma]^{\frac{1-\alpha}{2}} \|\partial_s^{l+1} k\|_2^\alpha\\
& \leq C e^{-\frac{1-\alpha}{2} \delta t}
\end{split}
\end{equation}
for all $t \in [0, \infty)$. If $l \in \mathbb{N}$ is odd, then Lemma 3.3 gives $\partial_s^l k(-1, t) = \partial_s^l k(1, t) = 0$, and while if $l \in \mathbb{N}$ is even, then Lemma 3.3 also gives $\int_\gamma \partial_s^l k \, ds = 0$. Thus, it follows from (23) and Lemmas 2.1, 2.2, and 3.2 that
\[
\|\partial_s^l k\|_{L^\infty}^2 \leq \frac{2 L_0}{\pi} \|\partial_s^{l+1} k\|_2^2 \leq C e^{-\delta t}
\]
for all $t \in [0, \infty)$ and $l \in \mathbb{N}$. Hence Lemma 5.6 follows.
\end{pf}
\begin{lemma}
Let $\theta \in (0, \pi)$. Let $\gamma$ be a global-in-time solution of (4) with initial curve $\gamma_0$ satisfying (5) and (6). Then there exist positive constants $C_1, C_2$ and $C_3$ such that
\begin{equation}
|\gamma(u, t)| < C_1 \quad \text{for all } (u, t) \in [-1, 1] \times [0, \infty),
\end{equation}
\begin{equation}
C_2 < |\partial_u \gamma(u, t)| < C_3 \quad \text{for all } (u, t) \in [-1, 1] \times [0, \infty).
\end{equation}
Moreover, for each $l \in \mathbb{N}$, there exists a positive constant $C_4$ such that
\begin{equation}
|\partial_u^l \gamma| < C_4 \quad \text{for all } (u, t) \in [-1, 1] \times [0, \infty).
\end{equation}
\end{lemma}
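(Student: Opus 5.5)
We prove the three bounds in the order (25), (24), (26). Throughout, the inputs are the exponential decay of all curvature derivatives (Lemma 5.6), the constancy of length (Lemma 3.2), and the boundary structure of (4).

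\textbf{Bound (25).} Since $\partial_t \gamma = -(h+\partial_s^2 k)\boldsymbol{\nu}$, Lemma 2.4 gives
\[
\partial_t |\partial_u\gamma| = k(\partial_s^2 k + h)|\partial_u \gamma|,
\]
and hence
\[
|\partial_u\gamma(u,t)| = |\partial_u\gamma_0(u)|\exp\Big(\int_0^t k(\partial_s^2 k+h)\,d\tau\Big).
\]
For the integrand, Lemma 5.6 gives $\|\partial_s^2k\|_{L^\infty}\le Ce^{-\delta t}$. Also $h=\|\partial_s k\|_2^2/\theta\le (L_0/\theta)\|\partial_s k\|_{L^\infty}^2\le Ce^{-2\delta t}$. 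Next, $|k|\le |\bar k|+\|k-\bar k\|_{L^\infty}$, where $\bar k=\theta/L_0$ by Lemmas 3.1–3.2 and $\|k-\bar k\|_{L^\infty}^2\le \frac{2L_0}{\pi}\|\partial_s k\|_2^2$ by Lemma 2.1; so $k$ is uniformly bounded. The integrand is therefore bounded by $Ce^{-\delta\tau}$, which is integrable on $[0,\infty)$. The exponential factor thus stays in a fixed interval $[e^{-C},e^{C}]$. Combined with the regularity of $\gamma_0$ on the closed interval, this yields $C_2<|\partial_u\gamma|<C_3$.

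\textbf{Bound (24).} Write $\gamma(u,t)=\gamma_0(u)+\int_0^t\partial_t\gamma\,d\tau$. Then
\[
|\partial_t\gamma|\le |h|+\|\partial_s^2k\|_{L^\infty}\le Ce^{-\delta\tau},
\]
so $|\gamma(u,t)|\le \sup|\gamma_0|+C/\delta$. As a sanity check, this bound is consistent with the geometry: the endpoints lie on $\eta_0,\eta_\theta$ and $L$ is fixed, so a uniform bound on one point already bounds the whole curve by $L_0$.

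\textbf{Bound (26).} We argue by induction on $l$, using the identity $\partial_u=|\partial_u\gamma|\partial_s$. Set $v:=|\partial_u\gamma|$, which is bounded above and below by (25). We have $\partial_u\gamma=v\boldsymbol{\tau}$, and repeated differentiation with the Frenet formulas gives
\[
\partial_u^l\gamma = P_l\big(v,\partial_uv,\dots,\partial_u^{l-1}v,\; k,\dots,\partial_s^{l-2}k\big)\cdot(\boldsymbol{\tau},\boldsymbol{\nu}),
\]
where $P_l$ is a polynomial. By Lemma 5.6, the $k$-terms are uniformly bounded. It therefore suffices to bound $\partial_u^j v$ for every $j$.

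For this, differentiate the evolution $\partial_t v=k(\partial_s^2k+h)v$ in $u$. This gives
\[
\partial_t\partial_u^j v = k(\partial_s^2k+h)\,\partial_u^jv + R_j,
\]
where $R_j$ is a sum of products of $\partial_u^i v$ with $i<j$ and of $s$-derivatives of $k$. Each term in $R_j$ carries at least one factor that decays like $e^{-\delta t}$, namely some $\partial_s^mk$ with $m\ge1$, or $h$. By the induction hypothesis, $|R_j|\le Ce^{-\delta t}$. A Grönwall argument with the integrable coefficient $k(\partial_s^2k+h)$ then bounds $\partial_u^jv$ uniformly in time.

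\textbf{Main obstacle.} The delicate part is this bookkeeping. One must check that every term in $R_j$ indeed contains a decaying factor. The potentially dangerous terms involve $k$ itself, which does not decay: it tends to $\bar k\neq0$. The structure $k(\partial_s^2k+h)$ guarantees that each term carries a factor $\partial_s^{m}k$ with $m\ge1$, or $h$, and so decays. Once this is verified, the uniform bound on $\partial_u^l\gamma$ follows.
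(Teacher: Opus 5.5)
Your proposal is correct and follows essentially the same route as the paper: (24) by integrating the exponentially decaying normal velocity in time, (25) by solving $\partial_t|\partial_u\gamma| = k(\partial_s^2k+h)|\partial_u\gamma|$ with an integrable exponent, and (26) by induction on $u$-derivatives of the metric factor via a linear ODE with integrable coefficient and exponentially decaying forcing. The only difference is cosmetic: the paper runs the induction on $\partial_u^l(|\partial_u\gamma|^{-1})$ together with decay of $\partial_u^l[k\partial_s^2k]$, whereas you work with $\partial_u^j v$ directly and check the decaying factor in $R_j$ term by term.
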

\begin{pf}
First, we prove (24). We observe from Lemma 5.6 that
\begin{align*}
|\gamma(u, \tau) - \gamma_0(u)| &\leq \int_0^\tau |\partial_t\gamma(u,t)|dt=\int_0^\tau |\partial_s^2k(u,t)+\frac{\int_\gamma(\partial_sk)^2ds}{\theta}|dt\\
&\leq \int_0^\tau \|{\partial_s^{2}k(t)}\|_{L^\infty} dt +\int_0^\tau \frac{\int_\gamma\|\partial_s^{2}k(t)\|_{L^\infty}ds}{\theta} dt\\
&\leq C \int_0^\tau e^{-\frac{1}{2}\delta t} dt + C \int_0^\tau e^{-\delta t} dt\\
&= \frac{2C}{\delta} \left[1 - e^{-\frac{1}{2}\delta \tau}\right]+\frac{2C}{\delta} \left[1 - e^{-\delta \tau}\right] < \frac{2C}{\delta}+\frac{C}{\delta}<C_1
\end{align*}
for all $(u, \tau) \in [-1, 1] \times (0, \infty)$. Thus (24) follows.
We turn to (25). From (3) we have
\[
\frac{d}{dt} |\partial_{u}\gamma|^2 = 2\langle \partial_{u}\gamma, \partial_{u}\partial_{t}\gamma \rangle = 2|\partial_{u}\gamma|^2 \langle \boldsymbol{\tau}, -\partial_{s}(\partial_{s}^{2}k+h(t))\boldsymbol{\nu}) \rangle = 2k(\partial_{s}^{2}k+h(t)) |\partial_{u}\gamma|^2,
\]
i.e.,
\[
\frac{d}{dt} |\partial_{u}\gamma| = k(\partial_{s}^{2}k+h(t))|\partial_{u}\gamma|.
\]

This clearly implies that
\begin{equation}
|\partial_{u}\gamma(u, t)| = |\gamma_0'(u)| \exp\left[ \int_0^t k(\partial_{s}^{2}k+h(t)) \, d\tau \right] \quad \text{for all } \ (u, t) \in (-1, 1) \times [0, \infty).
\end{equation}
From Lemmas 2.1, 3.1, 3.2, 4.1, and 5.3, we find $C > 0$ such that
\begin{equation}
\|{k}\|_{L^\infty} \leq \bar{k} + \|{k - \bar{k}}\|_{L^\infty} \leq \frac{\theta}{L_0} + \sqrt{\frac{2L_0}{\pi}} \|{\partial_{s}k}\|_2 \leq C.
\end{equation}
Combining Lemma 5.5 with (27) and (28), we get
\begin{align*}
 |\partial_{u}\gamma(u, t)| &\leq |\gamma_0'| \exp\left[ \int_0^t\|k\|_{L^\infty}\|\partial_{s}^{2}k\|_{L^\infty} \, d\tau + \int_0^t\|k\|_{L^\infty}\frac{\int_\gamma\|\partial_{s}^{2}k\|_{L^\infty}^2ds}{\theta} \, d\tau \right] \\
 &\leq \|\gamma_0'\|_{L^\infty} \exp\left[ \int_0^t\|k\|_{L^\infty}\|\partial_{s}^{2}k\|_{L^\infty} \, d\tau + \frac{L_0}{\theta} \int_0^t \|k\|_{L^\infty}\|\partial_{s}^{2}k\|_{L^\infty} \, d\tau \right]\\
 &\leq \|{\gamma_0'}\|_{L^\infty} \exp\left[ \int_0^\infty C e^{-\frac{1}{2}\delta \tau} d\tau + \int_0^\infty C e^{-\delta \tau} d\tau\right]  < C
\end{align*}
 for all $(u, t) \in (-1, 1) \times [0, \infty)$, and while,
 \begin{align*}
  |\partial_{u}\gamma(u, t)|&\geq |\gamma_0'(u)| \exp\left[- \int_0^t\|k\|_{L^\infty}\|\partial_{s}^{2}k\|_{L^\infty} \, d\tau - \frac{L_0}{\theta} \int_0^t \|k\|_{L^\infty}\|\partial_{s}^{2}k\|_{L^\infty} \, d\tau \right]\\
 &\leq \|{\gamma_0'}\|_{L^\infty} \exp\left[ -\int_0^\infty C e^{-\frac{1}{2}\delta \tau} d\tau - \int_0^\infty C e^{-\delta \tau} d\tau\right]  > C
 \end{align*}
 for all $(u, t) \in (-1, 1) \times [0, \infty)$.
 We turn to (26). For any smooth vector field $\phi$, we obtain
\begin{align}
 \partial_{t}\phi = v^l \partial_{s}\phi - P\left( v, \partial_{u}(v^{-1}), \dots, \partial_{u}^{l-1}(v^{-1}); \partial_{u}\phi, \dots, \partial_{u}^{l-1}\phi \right).
 \end{align}
where \( v := | \partial_u \gamma | \) and \( P \) denotes a polynomial of \( \phi, \partial_u\phi, \dots, \partial_u^{l-1}\phi \) with coefficients \( v, v^{-1}, \partial_u(v^{-1}), \dots, \partial_u^{l-1}(v^{-1}) \). We claim that
\begin{equation}
\begin{split}
\displaystyle \partial_u^l \left[ \|\partial_u \gamma\|_2^{-1} \right] \leq c_l \quad \text{for all } (u, t) \in [-1, 1] \times [0, \infty), \\[6pt]
\displaystyle \|\partial_u^l \left[ k \partial_s^2 k \right]\|_{L^\infty} \leq c_l e^{-\delta t} \quad \text{for all } (u, t) \in [-1, 1] \times [0, \infty),
 \end{split}
\end{equation}
for all \( l \in \mathbb{N} \cup \{0\} \). We have already shown that (30) holds true for \( l = 0 \). Suppose that (30) holds true up to \( m \in \mathbb{N} \cup \{0\} \). Since
\begin{equation*}
\begin{split}
\partial_t \partial_u^{m+1} \left( |\partial_u \gamma|^{-1} \right) &= -\partial_u^{m+1} \left[ |\partial_u \gamma|^{-2} \partial_t|\partial_u \gamma |\right]= -\partial_u^{m+1} \left[|\partial_u \gamma|^{-1} k (\partial_s^2 k+h(t)) \right]\\
&= -\partial_u^{m+1} \left[ |\partial_u \gamma|_2^{-1} \right] k (\partial_s^2 k+h(t)) - \sum_{k=0}^m \binom{m+1}{k} \partial_u^k \left[ |\partial_u \gamma|^{-1} \right] \partial_u^{m+1-k} \left[ k \partial_s^2 k \right]\\
&-h(t)\sum_{k=0}^m \binom{m+1}{k} \partial_u^k \left[ |\partial_u \gamma|^{-1} \right] \partial_u^{m+1-k}k,
\end{split}
\end{equation*}
we find \( C > 0 \) such that
\begin{equation*}
\partial_t \partial_u^{m+1} \left( |\partial_u \gamma|^{-1} \right) + \partial_u^{m+1} \left[ |\partial_u \gamma|^{-1} \right] k (\partial_s^2 k+h(t)) \leq C \sum_{k=0}^m \binom{m+1}{k} |\partial_u^{m+1-k} \left[ k \partial_s^2 k \right]|+h(t)C\sum_{k=0}^m \binom{m+1}{k} |\partial_u^{m+1-k} k|.
\end{equation*}
The differential inequality implies that
\begin{equation}
\begin{split}
\frac{d}{dt} \left[ \partial_u^{m+1} \left( |\partial_u \gamma|^{-1} \right) \exp\left[ \int_0^t k (\partial_s^2 k+h(t)) d\tau \right] \right]
&\leq C \sum_{k=0}^m \binom{m+1}{k} |\partial_u^{m+1-k} \left[ k\partial_s^2 k  \right]| \exp\left[ \int_0^t k (\partial_s^2 k+h(t)) d\tau \right]\\
&+h(t)C\sum_{k=0}^m \binom{m+1}{k} |\partial_u^{m+1-k}  k| \exp\left[ \int_0^t k (\partial_s^2 k+h(t)) d\tau \right].
\end{split}
\end{equation}
By Lemma 5.6, we have
\begin{equation}
h(t)=\frac{1}{\theta}\int_\gamma\partial_s^2k ds\leq \frac{L_0}{\theta}\|\partial_{s}^{2}k\|_{L^\infty}\leq C
\end{equation}
Taking \( \phi = k \partial_s^2 k \) in (29) with \( l = m + 1 \), we observe from the inductive assumption, (25), and Lemma 5.6 that
\begin{equation}
|\partial_u^{m+1} \left[ k \partial_s^2 k \right]| \leq c_{m+1} e^{-\delta t} \quad \text{for all } (u, t) \in [-1, 1] \times [0, \infty).
\end{equation}
Combining (31) and (32) we obtain
\[
\partial_u^{m+1} \left( |\partial_u \gamma(u, \tau)|^{-1} \right) \leq c_{m+1} \quad \text{for all } \ (u, t) \in [-1, 1] \times [0, \infty).
\]
Thus we see that (30) holds. We prove (26) by induction. Taking \( \phi = \gamma \) and \( l = 2 \) in (29), we see from (3) that
\[
\partial_u^2 \gamma = v^2 k v - P\left( v, \partial_u(v^{-1}); \partial_u \gamma \right).
\]
This together with (26), (29), and (31) implies that
\[
\left| \partial_u^2 \gamma(u, t) \right| \leq C \quad \text{for all } \ (u, t) \in [-1, 1] \times [0, \infty).
\]
Suppose that \( |\partial_u^k \gamma(u, t)| < C \) for all \( (u,t) \in [-1,1] \times [0, \infty) \) for all \( k \in \{2, \dots, m\} \) with some \( m \geq 2 \). Taking \( \phi = \gamma \) and \( l = m + 1 \) in (29), we have
\[
\partial_u^{m+1} \gamma = v^{m+1} \partial_s^{m-1}(kv) - P\left( v, \partial_u(v^{-1}), \dots, \partial_u^m(v^{-1}); \partial_u \gamma, \dots, \partial_u^m \gamma \right).
\]
Combining this with (30), Lemma 5.6, and the inductive assumption, we find \( C > 0 \) such that
\[
|\partial_u^{m+1} \gamma(u,t)| \leq C \quad \text{for all } \ (u,t) \in [-1,1] \times [0, \infty).
\]
This completes the proof.

We conclude this paper by proving full
limit convergence of the solutions of(4).
\end{pf}
\begin{theorem}
Let \( \theta \in (0, \pi) \) and \( \gamma_0 \) satisfy (5) and (6). Then the unique global-in-time solution of (4) converges exponentially to the circular arc \( \gamma_\infty \) of central angle \( \theta \) and length \( L[\gamma_\infty] = L[\gamma_0] \) as \( t \to \infty \) in the \( C^\infty \)-topology.
\end{theorem}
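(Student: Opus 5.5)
The plan follows the standard route: uniform bounds and exponential decay of the normal velocity give a limit curve, and the limit is identified through its curvature and boundary conditions. By Lemma 5.7 the family $\{\gamma(\cdot,t)\}_{t\ge0}$ is bounded in every $C^l([-1,1])$ and uniformly regular, so by Arzel\`a--Ascoli every sequence $t_j\to\infty$ has a subsequence converging in $C^\infty$ to a smooth regular curve.

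Rather than use compactness only, I will prove directly that $\gamma(\cdot,t)$ is Cauchy in every $C^l$. The velocity is $\partial_t\gamma=-(\partial_s^2k+h(t))\boldsymbol{\nu}$. By Lemma 5.6 we have $\|\partial_s^2k\|_{L^\infty}\le Ce^{-\delta t}$. Also $h(t)=\theta^{-1}\|\partial_sk\|_2^2\le CL_0\|\partial_sk\|_{L^\infty}^2\le Ce^{-2\delta t}$. Hence
\[
|\gamma(u,t_2)-\gamma(u,t_1)|\le\int_{t_1}^{t_2}|\partial_t\gamma|\,dt\le Ce^{-\delta t_1}.
\]
For higher $u$-derivatives I will write
\[
\partial_u^l\partial_t\gamma=-\partial_u^l\bigl[(\partial_s^2k+h)\boldsymbol{\nu}\bigr],
\]
expand $\partial_u=|\partial_u\gamma|\partial_s$ via (29), and control the resulting terms with the bounds (25), (26), (30). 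Every term contains a factor $\partial_s^jk$ with $j\ge1$ or a factor $h$, so it decays like $e^{-\delta t}$ by Lemma 5.6. Integrating in time then gives $\|\gamma(t_2)-\gamma(t_1)\|_{C^l}\le C_le^{-\delta t_1}$. So $\gamma(t)\to\gamma_\infty$ exponentially in $C^\infty$, and the limit is unique, without passing to subsequences.

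Next I identify $\gamma_\infty$. The boundary conditions $\gamma(-1)\in\eta_0$, $\gamma(1)\in\eta_\theta$ and orthogonality pass to the limit by $C^1$ convergence. Length is preserved by Lemma 3.2, so $L[\gamma_\infty]=L_0$; the relevant $|\partial_u\gamma|$ bounds are uniform, so the length integral converges. $\int k\,ds=\theta$ passes to the limit by Lemma 3.1. Lemma 5.6 gives $\|\partial_sk\|_{L^\infty}\to0$, so $k_\infty$ is constant, equal to $\bar k=\theta/L_0$. Hence $\gamma_\infty$ is a circular arc of radius $L_0/\theta$ with total turning $\theta$. Its endpoints lie on $\eta_0$ and $\eta_\theta$ and meet them perpendicularly, so the normals at the endpoints are along the lines. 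The centre therefore lies on both lines, i.e. at the origin, and the arc is the sector arc of central angle $\theta$, which is unique given the orientation.

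The main obstacle is the higher-order Cauchy estimate, since it needs exponential decay of $\partial_u^l$ of the velocity and not just boundedness. I will proceed inductively as in the proof of (30). Decay of $\partial_u^l\bigl[\partial_s^2k+h\bigr]$ follows from Lemma 5.6 combined with the bounded derivatives of $|\partial_u\gamma|^{-1}$. For $\partial_u^l\boldsymbol{\nu}$ only boundedness is needed, and that follows from (26). Finally, exponential convergence of $k$ to $\theta/L_0$ in every $C^l$ follows from Lemma 5.6 applied to $k-\bar k$ via Lemma 2.1.
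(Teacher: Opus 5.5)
Your proof is correct, and it takes a more direct route than the paper's.

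\textbf{The paper's route.} The paper follows Hiroi--Okabe:
\begin{itemize}
\item It uses only the time-integrability of $\|\partial_t\gamma\|_2$, together with Arzel\`a--Ascoli, to extract a subsequential $C^\infty$ limit.
\item It shows that this limit does not depend on the subsequence, via the functional $G[\gamma(t)]=\int_{-1}^1|\gamma(u,t)-\gamma_\infty(u)|^2\,du$.
\item It identifies $\gamma_\infty$ through a dichotomy: straight segment or circular arc.
\item It proves an exponential rate in $L^2$ and upgrades it to $C^\infty$ by interpolating against the uniform bounds of Lemma 5.7.
\end{itemize}

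\textbf{Your route.} You instead prove $\sup_u|\partial_u^l\partial_t\gamma|\le C_le^{-\delta t}$ for every $l$. You rewrite $\partial_u$ as $|\partial_u\gamma|\partial_s$, with coefficients controlled by (25), (26) and (30), and observe that every term carries either a factor $\partial_s^jk$ with $j\ge 2$ or the factor $h$. Integrating in time then gives the Cauchy property in each $C^l$. This yields existence of the limit, its uniqueness and the exponential rate in a single step, so the compactness argument and the $G$-argument become unnecessary.

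\textbf{Trade-off.} Your route costs some derivative bookkeeping for the velocity, but this is the same induction already carried out in the proof of (30). In return you get a rate in every $C^l$ with essentially the same exponent $\delta$, rather than the weaker exponent that interpolation produces. Your identification of the limit is also more explicit than the paper's Step 3: the curvature is the constant $\theta/L_0>0$, and the orthogonality condition forces the centre onto $\eta_0\cap\eta_\theta=\{0\}$.

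\textbf{One small correction.} Since $\eta_0$ and $\eta_\theta$ are full lines, orientation alone does not single out one arc. Both of the following are admissible:
\begin{itemize}
\item the arc of radius $L_0/\theta$ centred at the origin, running from $(L_0/\theta)(1,0)$ to $(L_0/\theta)(\cos\theta,\sin\theta)$;
\item its image under $x\mapsto -x$.
\end{itemize}
This does not affect the theorem: you have already shown convergence to a single limit, and either arc has central angle $\theta$ and length $L_0$.
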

\begin{pf}
The proof adopts the core framework of \cite{1}, with the primary modification being the substitution of the evolution equation for the curve diffusion flow (4) into Step 1 (subconvergence analysis) and Step 4 (convergence rate estimation). All critical integral and differential estimates are derived from Lemmas 5.5 and 5.6 in the preceding sections, and the fundamental convergence logic of \cite{1} is retained. The proof proceeds in four steps:\\
\textbf{Step 1.}
By Lemma 5.5, 5.6 (with the evolution equation substituted for the time derivative term), we have:
\begin{equation*}
\int_0^\tau \|\partial_t\gamma\|_2^2 dt =  \int_0^\tau \| (\partial_s^2 k + h(t))\boldsymbol{\nu} \|_2^2 dt \leq C\int_0^\tau e^{-\delta t} dt = \frac{C}{\delta}[1 - e^{-\delta\tau}]
\end{equation*}
implying $\int_0^\infty \|\partial_t\gamma\|_2 dt < \infty$. A monotone divergent sequence $\{t_j\}$ is constructed such that $\|\partial_t \gamma(t_j)\|_2 \to 0$ as $j \to \infty$. By the Arzel\`{a}-Ascoli theorem and diagonal argument, a subsequence $\{t_{j(k)}\}$ yields $\gamma(\cdot, t_{j(k)}) \to \gamma_\infty \in C^\infty([-1,1])$ ($C^\infty$-topology), and $\gamma_\infty$ is an equilibrium of (4).\\
\textbf{Step 2.}
By contradiction (following \cite{1}), assume two sequences $\{t_j\} \ and \ \{t_k'\}$ satisfy $\gamma(\cdot, t_j) \to \gamma_\infty$ and $\gamma(\cdot, t_k') \to \tilde{\gamma}_\infty \neq \gamma_\infty$. Define $G[\gamma(t)] = \int_{-1}^1 |{\gamma(u,t) - \gamma_\infty(u)}|^2 du$, by Lemma 5.7 gives $|\frac{d}{dt}G[\gamma(t)]| \leq C\|\partial_t \gamma\|_2$. Combining with $\int_0^\infty \|\partial_t \gamma\|_2 dt < \infty$, we get $|G[\gamma(t_j)] - G[\gamma(t_k')]| \to 0$, a contradiction. Thus, $\gamma(\cdot, t)$ converges uniquely to $\gamma_\infty$.\\
\textbf{Step 3.}
By Lemma 3.2, the length conservation holds: $L[\gamma_\infty] = L[\gamma_0]$. Lemma 3.1 gives $\int_\gamma k_\infty ds = \theta$, and from Lemma 4.2 we obtain the exponential estimate:
\begin{equation*}
\int_\gamma (k - \bar{k})^2 ds \leq Ce^{-\delta t}
\end{equation*}
These results imply $\gamma_\infty$ is either a straight line or a circular arc of a sector. Since a straight line violates the boundary condition (5), $\gamma_\infty$ must be the desired circular arc with central angle $\theta$.\\
\textbf{Step 4.}
Substitute the evolution equation of (5) into the expansion of $G[\gamma(t)]$, we have:
\begin{equation*}
\int_{-1}^1 |\gamma(u,\tau) - \gamma_\infty(u)|^2 du \leq C\int_\tau^T \|\partial_t \gamma\|_2 dt + G[\gamma(T)]
\end{equation*}
Letting $T \to \infty$ (noting $G[\gamma(T)] \to 0$ from Step 2) and applying the exponential estimate from Lemma 5.5, 5.6, we get:
\begin{equation*}
\int_{-1}^1 |\gamma(u,\tau) - \gamma_\infty(u)|^2 du \leq \frac{C}{\delta}e^{-\delta\tau}
\end{equation*}
Using a standard interpolation inequality, this $L^2$-convergence extends to the $C^\infty$-topology for all $l \in \mathbb{N}$, proving the exponential convergence of $\gamma(\cdot, t)$ to $\gamma_\infty$.
This completes the proof.
\end{pf}

\begin{remark}
The proof framework of Theorem 5.8 is largely consistent with that of \cite{1}. The key adjustments are: (1) the evolution equation of the curve diffusion flow (4) is substituted into the subconvergence analysis (Step 1) and convergence rate estimation (Step 4); (2) all relevant integral and differential estimates rely on Lemmas 5.5 and 5.6 established in the previous sections, rather than the estimates in \cite{1}. The core logic of convergence analysis (e.g., Arzel\`{a}-Ascoli theorem, contradiction argument for full convergence) remains unchanged from \cite{1}.
\end{remark}


\section*{Data availability Statement}
No datasets were generated or analyzed during the current study.

\section*{Acknowledgements}
The authors would like to thank the referees for careful reading of the manuscript and their valuable comments.
\section*{Conflict of interest} The author has no relevant financial or non-financial interests to disclose.









\end{document}